\newtheorem{theorem}{Theorem}[section]
\newtheorem{lemma}[theorem]{Lemma}
\newtheorem{cor}[theorem]{Corollary}
\newcommand{\Sym}{\mathop{\textrm{Sym}}}
\renewcommand{\wr}{\mathop{\textrm{wr}}}
\newcommand{\CI}{\mathop{\mathrm{DCI}}}
\newcommand{\Cay}{\mathop{\mathrm{Cay}}}
\newcommand{\Aut}{\mathop{\mathrm{Aut}}}
\def\Z{\mathbb Z}
\begin{document}
\title
{Elementary proof that $\Z_p^4$ is a DCI-group}

\author[J. Morris]{Joy Morris}
\address{Joy Morris, Department of Mathematics and Computer Science,
University of Lethbridge,
Lethbridge, AB. T1K 3M4. Canada}
\email{joy@cs.uleth.ca}

\thanks{This research was supported in part by the National Science
 and Engineering Research Council of Canada.}

\begin{abstract}
A finite group $R$ is a $\CI$-group if, whenever $S$ and $T$ are subsets of $R$ with the Cayley graphs $\Cay(R,S)$ and $\Cay(R,T)$ isomorphic, there exists an automorphism $\varphi$ of $R$ with $S^\varphi=T$.

Elementary abelian groups of order $p^4$ or smaller are known to be $\CI$-groups, while those of sufficiently large rank are known not to be $\CI$-groups.
The only published proof that elementary abelian groups of order $p^4$ are $\CI$-groups uses Schur rings and does not work for $p=2$ (which has been separately proven using computers). This paper provides a simpler proof that works for all primes. Some of the results in this paper also apply to elementary abelian groups of higher rank, so may be useful for completing our determination of which elementary abelian groups are $\CI$-groups.
\end{abstract}
\subjclass[2010]{20B10, 20B25, 05E18}
\keywords{Cayley graph, isomorphism problem, CI-group, dihedral group}

\maketitle
\section{Introduction}\label{sec:intro}
The classification of  $\CI$-groups is an open problem in the theory of Cayley graphs and is closely related to the isomorphism problem for graphs. It is a long-standing problem that has been worked on a lot, see \cite{DMV,Lisurvey} for additional background. The formulation of this problem was introduced by Babai in \cite{babai}. Elementary abelian groups of order $p^4$ or smaller are known to be $\CI$-groups \cite{Turner,Godsil,BrianLewis,Da,HirasakaMuzychuk}, while those of sufficiently large rank are known not to be $\CI$-groups \cite{Muz,Spiga,Somlai}.
The only published proof that elementary abelian groups of order $p^4$ are $\CI$-groups \cite{HirasakaMuzychuk}, uses Schur rings and does not work for $p=2$ (which has been separately proven using computers). This paper provides a simpler proof that works for all primes. It is based on work from the author's PhD thesis \cite{thesis} (which was completed concurrently with the Hirasaka-Muzychuk result), but has been considerably shortened and simplified. Some of the results in this paper have been newly generalised to apply to elementary abelian groups of higher rank, so may be useful for completing our determination of which elementary abelian groups are $\CI$-groups.

Let $R$ be a finite group and let $S$ be a subset of $R$. The \textit{Cayley digraph} of $R$ with connection set $S$, denoted $\Cay(R,S)$, is the digraph with vertex set $R$ and with $(x,y)$ being an arc if and only if $x^{-1}y\in S$. Now, $\Cay(R,S)$ is said to have the \textit{Cayley isomorphism} property for digraphs, or be a $\CI$-\textit{graph} for short, if whenever $\Cay(R,S)$ is isomorphic to $\Cay(R,T)$, there exists an automorphism $\varphi$ of $R$ with $\varphi(S)=T$. Clearly, $\Cay(R,S)\cong \Cay(R,\varphi(S))$ for every $\varphi\in \Aut(R)$ so that for a $\CI$-graph, solving the isomorphism problem boils down to understanding the automorphisms of the group $R$.
 The group $R$ is a $\CI$-\textit{group} if $\Cay(R,S)$ is a $\CI$-graph for every subset $S$ of $R$. Moreover, $R$ is a $\mathrm{CI}$-group if $\Cay(R,S)$ is a $\CI$-graph for every inverse-closed subset $S$ of $R$. Thus every $\CI$-group is a $\mathrm{CI}$-group.

Throughout this paper, $p$ will always denote a prime number, and calculations are always performed modulo $p$ (i.e., in $\Z_p$).

\begin{theorem}\label{thrm:main}
Let $p$ be a prime number and let $R$ be the elementary abelian group of order $p^4$. Then $R$ is a $\CI$-group.
\end{theorem}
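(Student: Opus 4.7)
The plan is to invoke Babai's criterion \cite{babai}: $R$ is a $\CI$-group if and only if, for every subset $S \subseteq R$, any two regular subgroups of $G := \Aut(\Cay(R,S))$ isomorphic to $R$ are conjugate in $G$. So I fix $S$ and a second regular subgroup $R' \le G$ with $R' \cong R = \Z_p^4$, and aim to produce $\sigma \in G$ with $(R')^\sigma = R$.

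The first reduction is Sylow-theoretic: since $|R| = |R'| = p^4$, both are $p$-subgroups of $G$, and by Sylow's theorem I may conjugate $R'$ inside $G$ so that $R, R' \le P$ for a common Sylow $p$-subgroup $P \le G$. The problem thus becomes a question about the regular elementary abelian subgroups of order $p^4$ sitting inside the transitive $p$-group $P$ of degree $p^4$, and all subsequent work takes place inside a finite $p$-group.

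The second reduction is inductive on $k := \dim_{\Z_p}(R \cap R')$. The cases $k = 4$ (where $R = R'$) and $k = 3$ (where $R \cap R'$ has index $p$ in each of $R, R'$, hence is normal in $\langle R, R'\rangle$) are immediate. For smaller $k$ I would look for a nontrivial subgroup $N \le R \cap R'$ that is normal in $\langle R, R'\rangle$, or more generally an $\langle R, R'\rangle$-invariant block system on $R$ whose block containing the identity lies in $R \cap R'$; passing to the quotient yields a Cayley digraph on an elementary abelian group of rank at most $3$, which is already known to be a $\CI$-group \cite{Turner,Godsil,BrianLewis,Da}, so I obtain conjugacy modulo $N$ and then lift via a normal-subgroup/cocycle argument by examining the kernel of the quotient action.

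The main obstacle is the case where no such reduction is available, typically when $R \cap R' = 1$ and no common nontrivial $\langle R,R'\rangle$-normal subgroup exists. Here I would work with $P$ directly, using the classical fact that an abelian regular subgroup is self-centralising in the symmetric group, so that $N_G(R)/R$ embeds into $\Aut(R) \cong \mathrm{GL}(4,p)$; the strategy is then to combine orbit counts on the fixed-point sets of order-$p$ elements of $P$ with the combinatorial constraints the connection set $S$ must satisfy in order to admit two distinct regular copies of $\Z_p^4$, eventually producing an explicit automorphism of $R$ that maps $S$ to itself and realises the desired conjugation. Making this case analysis work uniformly for all primes --- in particular for $p = 2$, where the Schur-ring machinery of \cite{HirasakaMuzychuk} breaks down --- is the technical heart of the argument and the step where the novel input of this paper is needed.
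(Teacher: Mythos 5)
Your opening reductions (Babai's criterion plus pushing both regular copies into a common Sylow $p$-subgroup) match the paper's setup, but after that the proposal has a genuine gap rather than a proof. The first missing idea is the $2$-closure refinement of Babai's criterion (Lemma~\ref{babaistrong}): the paper does not conjugate inside $G=\langle R_L,\pi^{-1}R_L\pi\rangle$ or inside $\Aut(\Cay(R,S))$ for a fixed $S$, but inside $G^{(2)}$, and every conjugating element it builds ($\psi$ in Lemma~\ref{fixing-blocks}, $\varphi$ in Lemmas~\ref{wreathing} and~\ref{lack-of-wreathing}) is assembled by patching together restrictions of elements of $G$ to different equivalence classes, so it lies in $G^{(2)}$ but typically not in $G$. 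Your plan has no mechanism for producing such elements, and without it the construction cannot start.

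The second problem is the induction on $k=\dim(R\cap R')$. The case $k=3$ is not ``immediate'': normality of $R\cap R'$ in $\langle R,R'\rangle$ gives a quotient action on a set of size $p$, but lifting conjugacy from a quotient back to the full degree is precisely the hard part of the CI problem, and the proposed ``normal-subgroup/cocycle argument'' is exactly the step that fails in general --- if this kind of quotient-and-lift induction worked, it would prove $\Z_p^n$ is a $\CI$-group for all $n$, which is false for large $n$. The paper's actual route is orthogonal to this: it fixes generators $\tau_0,\ldots,\tau_3$ of $R_L$ adapted to the canonical block systems $\mathcal B_1,\ldots,\mathcal B_3$ of the iterated wreath product, shows $\tau_0'=\tau_0$ for free, conjugates $\tau_1$ and then $\tau_2$ into $\pi^{-1}R_L\pi$ one at a time via a detailed analysis of the orbits of $G_v$ relative to the size-$p$ and size-$p^2$ block systems (the relations $\equiv$ and $\equiv_2$), and finishes with Lemma~\ref{tau1}. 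Your final paragraph, which is where all of Sections~\ref{tau3}--\ref{therest} of the paper would have to live, is a statement of strategy with no argument, so the heart of the theorem remains unproved in the proposal.
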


The structure of the paper is straightforward. In Section~\ref{basictools}, we provide some preliminary definitions and notation, and reproduce some lemmas from other papers that will apply directly to our situation, including our main tool. In Sections~\ref{tau3},~\ref{tau2tau2'nice} and~\ref{therest} we complete the proof of Theorem~\ref{thrm:main}.   Where possible, we will state our results in the more general context of Cayley graphs on arbitrary elementary abelian groups, as some of the results may be useful for proving that elementary abelian groups of higher rank are $\CI$-groups.

\section{Preliminary results and notation}\label{basictools}
Babai~\cite{babai} proved a very useful criterion for determining when a finite group $R$ is a $\CI$-group and, more generally, when $\Cay(R,S)$ is a $\CI$-graph.

\begin{lemma}\label{lemma}Let $R$ be a finite group and let $S$ be a subset of $R$. Then $\Cay(R,S)$ is a $\CI$-graph if and only if $\Aut(\Cay(R,S))$ contains a unique conjugacy class of regular subgroups isomorphic to $R$.
\end{lemma}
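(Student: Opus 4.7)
The plan is to use the correspondence between regular subgroups of $A := \Aut(\Cay(R,S))$ isomorphic to $R$ and presentations of $\Cay(R,S)$ as a Cayley graph on $R$. Let $R_L \le A$ denote the left regular representation of $R$, write $e$ for the identity of $R$, and recall the standard structural fact $\norm{\Sym(R)}{R_L} = R_L \rtimes \Aut(R)$, whose elements can be written uniquely as $\lambda_r \varphi$ with $\lambda_r \in R_L$ and $\varphi \in \Aut(R)$ acting on $R$ as a permutation.

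For the forward direction, I would assume that $\Cay(R,S)$ is a $\CI$-graph and take an arbitrary regular subgroup $H \le A$ with $H \cong R$. Fix an isomorphism $\psi : R \to H$. The regular action of $H$ on the vertex set $R$ expresses $\Cay(R,S)$ as the Cayley graph $\Cay(H, S'')$, where $S'' := \{h \in H : h(e) \in S\}$; transporting back via $\psi^{-1}$ produces a connection set $T := \psi^{-1}(S'')$ with $\Cay(R,T) \cong \Cay(R,S)$. The $\CI$-hypothesis, applied to this isomorphism, supplies $\varphi \in \Aut(R)$ with $\varphi(S) = T$. I would then define $\tau : R \to R$ by $\tau(r) := \psi(\varphi(r))(e)$ and verify directly that $\tau$ lies in $A$ (as the composition of the graph isomorphism $\varphi : \Cay(R,S) \to \Cay(R,T)$ with $r \mapsto \psi(r)(e) : \Cay(R,T) \to \Cay(R,S)$) and that $\tau \lambda_r \tau^{-1} = \psi(\varphi(r))$ for every $r \in R$, so $\tau R_L \tau^{-1} = H$.

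For the reverse direction, assume the conjugacy condition and let $\beta : \Cay(R,S) \to \Cay(R,T)$ be any graph isomorphism. Then $\beta^{-1} R_L \beta$ is a regular subgroup of $A$ isomorphic to $R$, so by hypothesis there exists $\alpha \in A$ with $\beta \alpha^{-1} \in \norm{\Sym(R)}{R_L}$. Writing $\beta \alpha^{-1} = \lambda_r \varphi$ with $\lambda_r \in R_L$ and $\varphi \in \Aut(R)$ gives $\beta = \lambda_r \varphi \alpha$. Reading this composition right to left: $\alpha$ is an automorphism of $\Cay(R,S)$, $\varphi$ carries $\Cay(R,S)$ onto $\Cay(R, \varphi(S))$, and $\lambda_r$ is an automorphism of this latter graph. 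Hence $\beta$ is an isomorphism $\Cay(R,S) \to \Cay(R,\varphi(S))$; comparing with the given target $\Cay(R,T)$ forces $\varphi(S) = T$, which is exactly the conclusion of the $\CI$-property.

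The main obstacle is purely bookkeeping: in the forward direction, correctly identifying the connection set $T$ produced by the regular action of $H$ and checking that the permutation $\tau$ simultaneously preserves $\Cay(R,S)$ and conjugates $R_L$ to $H$; in the reverse direction, untangling which map is an automorphism of which graph along the composition. Once these are in hand there is no further content — the argument is essentially the one given by Babai.
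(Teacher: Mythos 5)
Your proof is correct. The paper gives no proof of this lemma at all --- it is quoted as Babai's criterion with only a citation --- and your argument (realising a regular subgroup $H\cong R$ as a second Cayley presentation of the graph to produce the connection set $T$ in the forward direction, and factoring an arbitrary isomorphism through $\norm{\Sym(R)}{R_L}=R_L\rtimes\Aut(R)$ in the converse) is exactly the standard one, i.e., essentially Babai's original argument, with all the bookkeeping (the identification of $S''$, the conjugation computation $\tau\lambda_r\tau^{-1}=\psi(\varphi(r))$, and the comparison of image arc sets forcing $\varphi(S)=T$) carried out correctly.
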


Let $\Omega$ be a finite set and let $G$ be a permutation group on $\Omega$. The $2$-\textit{closure} of $G$, denoted $G^{(2)}$, is the set $$\{\pi\in \Sym(\Omega)\mid \forall (\omega,\omega')\in \Omega^2, \textrm{there exists } g_{\omega\omega'}\in G \textrm{ with }\pi((\omega,\omega'))=g_{\omega\omega'}((\omega,\omega'))\},$$
where $\Sym(\Omega)$ is the symmetric group on $\Omega$. Observe that in the definition of $G^{(2)}$, the element $g_{\omega\omega'}$ of $G$ may depend upon the ordered pair $(\omega,\omega')$. The group $G$ is said to be $2$-\textit{closed} if $G=G^{(2)}$.

It is easy to verify that $G^{(2)}$ is a subgroup of $\Sym(\Omega)$ containing $G$ and, in fact, $G^{(2)}$ is the smallest (with respect to inclusion) subgroup of $\Sym(\Omega)$ preserving every orbital digraph of $G$. It follows that the automorphism group of a graph is $2$-closed.
Therefore Lemma~\ref{lemma} immediately yields:

\begin{lemma}[Lemma 2.2 of~\cite{DMV}]\label{babaistrong}
Let $R$ be a finite group and let $R_L$ be the left regular representation of $R$ in $\Sym(R)$. If, for every $\pi\in \Sym(R)$, the groups $R_L$ and $\pi^{-1}R_L\pi$ are conjugate in $\langle R_L,\pi^{-1}R_L\pi\rangle^{(2)}$, then $R$ is a $\CI$-group.
\end{lemma}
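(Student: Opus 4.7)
The plan is to derive the lemma as an almost immediate consequence of Lemma~\ref{lemma} together with the $2$-closure observations preceding it. Fix $S\subseteq R$ and set $A=\Aut(\Cay(R,S))$. By Lemma~\ref{lemma} it suffices to prove that every regular subgroup $H\le A$ isomorphic to $R$ is $A$-conjugate to $R_L$; since $S$ is arbitrary this will show $R$ is a $\CI$-group.

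First I would realise such an $H$ as a $\Sym(R)$-conjugate of $R_L$, so that the hypothesis can be invoked. This rests on the classical fact that any two regular subgroups of $\Sym(R)$ that are abstractly isomorphic are already conjugate in $\Sym(R)$: given an isomorphism $\phi\colon R_L\to H$ and a point $\omega_0\in R$, the rule $\pi\colon h(\omega_0)\mapsto \phi(h)(\omega_0)$ defines a bijection of $R$ (using regularity of $R_L$ and of $H$), and the short calculation $\pi r\pi^{-1}(\phi(h)(\omega_0))=\pi(rh(\omega_0))=\phi(rh)(\omega_0)=\phi(r)\phi(h)(\omega_0)$ gives $\pi R_L\pi^{-1}=H$. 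Writing $\sigma=\pi^{-1}$ we obtain $H=\sigma^{-1}R_L\sigma$ for some $\sigma\in\Sym(R)$.

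By hypothesis, $R_L$ and $\sigma^{-1}R_L\sigma=H$ are then conjugate inside $\langle R_L,H\rangle^{(2)}$. Since both $R_L$ and $H$ lie in $A$, we have $\langle R_L,H\rangle\le A$, and taking $2$-closures together with the fact recalled in the excerpt that $A$ is itself $2$-closed gives $\langle R_L,H\rangle^{(2)}\le A^{(2)}=A$. Hence $R_L$ and $H$ are conjugate in $A$, as required.

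The main obstacle is nothing more than the bookkeeping step of transferring the conjugator from $\langle R_L,H\rangle^{(2)}$ back into $A$, and this is exactly what $2$-closedness of graph automorphism groups is designed to handle. The ``regular-conjugacy'' fact used in the middle paragraph is entirely standard and presents no real difficulty, so the whole argument amounts to combining Babai's criterion with the elementary observation that $\langle R_L,H\rangle^{(2)}\subseteq A$.
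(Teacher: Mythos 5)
Your proposal is correct and is exactly the argument the paper has in mind: the paper derives Lemma~\ref{babaistrong} as an immediate consequence of Lemma~\ref{lemma} together with the monotonicity of $2$-closure and the $2$-closedness of graph automorphism groups, which is precisely what you have written out (including the standard fact that abstractly isomorphic regular subgroups of $\Sym(R)$ are conjugate in $\Sym(R)$, so that the hypothesis applies to an arbitrary regular subgroup $H\le\Aut(\Cay(R,S))$). No gaps; your write-up simply makes explicit the steps the paper leaves to the reader.
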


We will use this formulation of Babai's criterion without comment in our proof of Theorem~\ref{thrm:main}.

We now set up some notation that will be used throughout the rest of the paper.

Let $R$ be an elementary abelian group of rank $n$. Set $G=\langle R_L, \pi^{-1}R_L\pi\rangle$. Let $P$ be a Sylow $p$-subgroup of $G$ with $R\leq P$ and let $T$ be a Sylow $p$-subgroup of $\Sym(\Omega)$ with $P\leq T$. From Sylow's theorems, replacing $\pi^{-1}R_L\pi$ by a suitable $G$-conjugate, we may assume that $\pi^{-1}R_L\pi\leq P$, so that in fact $G=P$. From now on we will refer exclusively to $G$, but keep in mind that $G$ itself is a $p$-group.

Observe that the group $T$ is $\Z_p\wr \ldots\wr\Z_p$ ($n$ copies of $\Z_p$), which has a unique system of imprimitivity with blocks of size $p^i$ for each $0 \le i \le n$. Since $R_L$ and $\pi^{-1}R_L\pi$ are acting regularly, they must admit these same systems of imprimitivity. For $0 \le i \le n$, let $\mathcal B_i$ be the system of imprimitivity of $T$ that consists of blocks of size $p^i$.

For each $0 \le i \le n-1$, choose $\tau_i$ to be an element of $R_L$ that fixes each set in $\mathcal B_{i+1}$ setwise, and has order $p$ in its action on the sets in $\mathcal B_i$. Notice that $\langle \tau_0, \ldots, \tau_{n-1} \rangle =R_L$. Let $v$ be a fixed element of $R$ (recall that both $R_L$ and $\pi^{-1}R_L\pi$ are acting on $R$). For each $0 \le i \le n-1$, define $\tau_i'$ to be the unique element of $\pi^{-1}R_L\pi$ such that $\tau_i'(v)=\tau_i(v)$. 

If we say that a permutation fixes every element of $\mathcal B_i$ for some $i$, this means that the blocks of $\mathcal B_i$ are all fixed setwise, and does not imply that any point of $R$ is fixed.

For any $v \in R$, use $B_v$ to denote the element of $\mathcal B_1$ that contains $v$, and $C_v$ to denote the element of $\mathcal B_2$ that contains $v$. In some cases, we will be dealing with two or even three systems of imprimitivity of $G$ with blocks of size $p$; in this event, we call the additional systems $\mathcal B_1'$ and $\mathcal B_1''$, and $B_v'$ and $B_v''$ will denote the elements of these systems (respectively) that contain $v$.

The following result is a restatement of Proposition 2.3 of~\cite{Zpn}.

\begin{lemma}\label{samePgp}
Let $\pi^{-1}R_L\pi$ and $R_L$ be isomorphic to $\Z_p^n$ and lie in the same Sylow $p$-subgroup of $\Sym(R)$.  Suppose that $\tau \in R_L$ fixes every element of $\mathcal B_i$ for some $i\ge 1$, and has order $p$ on the elements of $\mathcal B_{i-1}$. Let $\tau' \in \pi^{-1}R_L\pi$ be such that $\tau'(B)=\tau(B)$ for some $B \in \mathcal B_{i-1}$. Then $\tau'(B')=\tau(B')$ for every $B' \in \mathcal B_{i-1}$.
\end{lemma}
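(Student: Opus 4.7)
The plan is to show that both $\tau$ and $\tau'$ act on $\mathcal{B}_{i-1}$ as a uniform cyclic shift: within each $\mathcal{B}_i$-block, they cycle the $p$ sub-blocks of $\mathcal{B}_{i-1}$ by the same amount, independent of which $\mathcal{B}_i$-block one is looking at. Once this is established, agreement of $\tau$ and $\tau'$ on a single $\mathcal{B}_{i-1}$-block forces the two shifts to coincide, and the conclusion follows immediately.

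First, I would verify that $\tau'$ fixes every element of $\mathcal{B}_i$ setwise. Let $C \in \mathcal{B}_i$ be the block containing $B$. Since $\tau$ fixes $C$ setwise, $\tau'(B) = \tau(B) \in C$, so $\tau'(C) = C$ because $\tau'$ preserves $\mathcal{B}_i$. The induced action of $\pi^{-1}R_L\pi$ on $\mathcal{B}_i$ is a transitive elementary-abelian action of order $p^{n-i}$ on $p^{n-i}$ points, hence regular; any non-identity element is therefore fixed-point-free on $\mathcal{B}_i$, so $\tau'$ must act trivially on $\mathcal{B}_i$.

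Next, I would set up the relevant wreath-product structure. The image of the ambient Sylow $p$-subgroup in $\Sym(\mathcal{B}_{i-1})$ is the iterated wreath product $W = \Z_p \wr \cdots \wr \Z_p$ with $n-i+1$ factors, whose base subgroup $M \cong \Z_p^{p^{n-i}}$ consists of precisely those elements of $W$ fixing every $\mathcal{B}_i$-block setwise; each coordinate of $M$ records the cyclic shift applied to the $p$ sub-blocks of $\mathcal{B}_{i-1}$ inside one $\mathcal{B}_i$-block. By the previous step, both $\tau$ and $\tau'$ induce elements of $M$, giving tuples $(c_C^\tau)_{C \in \mathcal{B}_i}$ and $(c_C^{\tau'})_{C \in \mathcal{B}_i}$.

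The key observation is that for any abelian subgroup $H \le W$ that is transitive on $\mathcal{B}_i$, every element of $H \cap M$ has constant coordinates. Indeed, conjugation by $h \in H$ acts on $M$ by permuting its coordinates according to $h$'s induced action on $\mathcal{B}_i$; abelianness of $H$ forces each element of $H \cap M$ to be fixed under this conjugation action, and transitivity of $H$ on $\mathcal{B}_i$ then forces all its coordinates to be equal. Applying this to $H = R_L$ (for $\tau$) and $H = \pi^{-1}R_L\pi$ (for $\tau'$), both tuples are constant, say with values $c^\tau$ and $c^{\tau'}$. The hypothesis $\tau(B) = \tau'(B)$ forces $c^\tau = c^{\tau'}$, which gives $\tau(B') = \tau'(B')$ for every $B' \in \mathcal{B}_{i-1}$. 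The only real obstacle is keeping clear the identification of the $\mathcal{B}_i$-kernel with the wreath-product base $M$ and the coordinate-permutation description of conjugation; both are standard wreath-product facts, but they need to be set up carefully before the abelian-plus-transitive argument can be invoked.
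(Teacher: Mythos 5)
Your proof is correct. Note that the paper itself gives no proof of this lemma --- it is imported verbatim as Proposition 2.3 of the cited reference [Zpn] --- so there is nothing in this paper to compare against; but your argument is sound and self-contained, and is the natural one. The three ingredients all check out: (i) $\tau'$ stabilizes the $\mathcal B_i$-block $C\supseteq B$ because $\tau'(B)=\tau(B)\subseteq C$, and since a transitive abelian group induces a regular (hence fixed-point-free away from the identity) action on any block system, $\tau'$ must fix every block of $\mathcal B_i$; (ii) the kernel of the induced action of the Sylow subgroup on $\mathcal B_i$, viewed inside $\Sym(\mathcal B_{i-1})$, is the base group $M\cong\Z_p^{p^{n-i}}$, and conjugation by any element of the wreath product permutes its coordinates according to the induced action on $\mathcal B_i$ (the base part of the conjugating element acts trivially since $M$ is abelian); (iii) abelianness plus transitivity on $\mathcal B_i$ then forces constant coordinates, and agreement on one sub-block of one $p$-cycle pins down the common exponent. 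One tiny presentational quibble: in step (i) you assert the order of the induced action and then deduce regularity, whereas the clean logical order is transitive $+$ abelian $\Rightarrow$ regular $\Rightarrow$ order $p^{n-i}$; the intended argument is clearly the standard one either way.
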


The hypothesis that $\pi^{-1}R_L\pi$ and $R_L$ lie in the same Sylow $p$-subgroup of $\Sym(R)$ will generally be considered to be part of the notation we have established (that $G$ is a $p$-group), so will be tacitly assumed in  our results. Since it is the key assumption needed to prove the above result, however, we have stated it explicitly this once.

We introduce a bit more notation that will be required for the next result, and will be used in Section~\ref{tau3}. Let $K$ be the kernel of the action of $G$ on $\mathcal{B}_1$.
We define an equivalence relation $\equiv$ on $\Omega$. Given $x,x' \in R$, we have $x\equiv x'$ whenever, for every $\rho\in G$, $\rho\vert_{B_x} = {\rm id}\vert_{B_x}$ if and only if $\rho\vert_{B_{x'}} = {\rm id}\vert_{B_{x'}}$ (or equivalently, $\rho\vert_{B_x}$ is a $p$-cycle if and only if $\rho\vert_{B_{x'}}$ is a $p$-cycle). Let $\mathcal E$ denote the set of equivalence classes of $\equiv$.

\begin{lemma}\label{tedslemma}
For every $\rho\in K$ and for every $E\in \mathcal{E}$, the permutation $\rho_E:R\to R$, fixing $R\setminus E$ pointwise and acting on $E$ as $\rho$ does, lies in  $G^{(2)}$.
\end{lemma}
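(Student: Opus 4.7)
The plan is to verify the defining property of $G^{(2)}$ directly: for every ordered pair $(x,y)\in R\times R$ I must produce some $g\in G$ with $g(x)=\rho_E(x)$ and $g(y)=\rho_E(y)$. Split into cases by whether each of $x,y$ lies in $E$. When both are in $E$, take $g=\rho$; when neither is, take $g={\rm id}$. This reduces the lemma to the mixed case (the two sub-cases being symmetric), so assume $x\in E$ and $y\notin E$, with target $g\in G$ satisfying $g(x)=\rho(x)$ and $g(y)=y$.

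Two points in the same block of $\mathcal{B}_1$ are automatically $\equiv$-equivalent, so $B_x\neq B_y$. Since $\rho\in K$ stabilizes every block, $\rho(x)\in B_x$. The image $K\vert_{B_x}$ is a $p$-subgroup of $\Sym(B_x)$, and $|B_x|=p$, so $K\vert_{B_x}$ lies in a Sylow $p$-subgroup of $\Sym(B_x)$, which is cyclic of order $p$. Hence $K\vert_{B_x}$ is either trivial or $\Z_p$, and the same is true of $K\vert_{B_y}$. If $K\vert_{B_x}$ is trivial then $\rho(x)=x$ and $g={\rm id}$ suffices; otherwise $\rho\vert_{B_x}$ is a $p$-cycle generating $K\vert_{B_x}\cong\Z_p$.

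It now suffices to find $\sigma\in K$ with $\sigma\vert_{B_x}$ a $p$-cycle and $\sigma\vert_{B_y}={\rm id}\vert_{B_y}$: for then some power $\sigma^m$ satisfies $\sigma^m\vert_{B_x}=\rho\vert_{B_x}$, so $g:=\sigma^m$ gives $g(x)=\rho(x)$ and $g(y)=y$. If $\rho$ itself already fixes $B_y$ pointwise, take $\sigma=\rho$. Otherwise $\rho\vert_{B_y}$ is a $p$-cycle, and I invoke $x\not\equiv y$ to obtain some $\tau\in K$ whose restrictions to $B_x$ and $B_y$ differ in type; multiplying $\rho$ by an appropriate power of $\tau$ (chosen to cancel the $p$-cycle action on $B_y$) then yields $\sigma$.

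The main obstacle is this last construction: the failure of equivalence only supplies \emph{some} asymmetry between $B_x$ and $B_y$, and I must ensure that the witness can be rearranged to give exactly the asymmetry needed (nontrivial on $B_x$, trivial on $B_y$) rather than the opposite. Here I exploit that $K\vert_{B_x}$ and $K\vert_{B_y}$ are each cyclic of prime order, so that an ``oppositely oriented'' witness can always be combined with $\rho$---which already acts as a $p$-cycle on $B_x$---to produce one with the correct orientation.
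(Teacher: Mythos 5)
Your argument is, in substance, exactly the proof of the result the paper cites: the paper's own ``proof'' is a one-line reference to Lemma~2 of Dobson (1995), and what you have reconstructed is that lemma's argument --- verify the pair condition defining $G^{(2)}$ directly, note that the only nontrivial case is $x\in E$, $y\notin E$ (whence $B_x\neq B_y$ and $x\not\equiv y$), and use the fact that $K\vert_{B}$ is trivial or cyclic of order $p$ for each block $B$ to produce an element of $K$ that agrees with $\rho$ on $B_x$ and fixes $B_y$ pointwise. The case analysis is right, and the ``reorientation'' step is sound: if the witness $\tau\in K$ is trivial on $B_x$ and a $p$-cycle on $B_y$, then $\rho\vert_{B_y}$ lies in $\langle\tau\vert_{B_y}\rangle$, so $\sigma=\rho\tau^{-m}$ with $\tau^{m}\vert_{B_y}=\rho\vert_{B_y}$ does what you want; if the witness is already oriented correctly, a power of it serves directly. (One tiny slip: $K\vert_{B_x}$ nontrivial does not force $\rho\vert_{B_x}$ to be a $p$-cycle --- but if $\rho\vert_{B_x}=\mathrm{id}$ you are again done with $g=\mathrm{id}$, so nothing is lost.)

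The one point that needs tightening is your claim that $x\not\equiv y$ supplies a witness $\tau\in K$. As written in this paper, $\equiv$ is defined by quantifying over all $\rho\in G$, not only over $\rho\in K$; a witness could a priori be an element of $G\setminus K$ that fixes $B_x$ pointwise while moving $B_y$ to a different block. Such a $\tau$ does not stabilize $B_y$ setwise, so $\tau\vert_{B_y}$ is not an element of the cyclic group $K\vert_{B_y}$ and your power/combination step does not apply to it. Your argument genuinely requires a witness lying in $K$ (or at least stabilizing both $B_x$ and $B_y$ setwise). In Dobson's original lemma the relation is defined via $K$, and the paper's parenthetical ``(or equivalently, $\rho\vert_{B_x}$ is a $p$-cycle if and only if \dots)'' only makes sense for block-stabilizing $\rho$, so the $K$-version is surely what is intended here; under that reading your proof is complete. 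You should either say explicitly that you are using the $K$-version of $\equiv$, or justify why a witness can be chosen in $K$ --- with the literal ``all of $G$'' definition the classes can be strictly smaller and the construction of $g$ for the mixed pair is no longer available.
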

\begin{proof}
This is Lemma~$2$ in~\cite{Dobson1995}.  (We remark that~\cite[Lemma 2]{Dobson1995} is only stated for graphs, but the result holds for each orbital digraph of $G$, and hence for $G^{(2)}$.)
\end{proof}

The final result that we require that we require from the existing literature is Proposition 2.7 from~\cite{Zpn}, restated slightly. 

\begin{lemma}\label{tau1}
Let $R$ be an elementary abelian group of rank $n$.
Under the notation we have established, if $\tau_0,\ldots,\tau_{n-2} \in \pi^{-1}R_L\pi$, then $R_L$ and $\pi^{-1}R_L\pi$ are conjugate in $G^{(2)}$.
\end{lemma}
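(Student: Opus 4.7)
Since $\tau_i'$ is the unique element of $\pi^{-1}R_L\pi$ with $\tau_i'(v)=\tau_i(v)$, the hypothesis $\tau_0,\ldots,\tau_{n-2}\in\pi^{-1}R_L\pi$ forces $\tau_i'=\tau_i$ for $0\le i\le n-2$. Hence $M=\langle\tau_0,\ldots,\tau_{n-2}\rangle$ is a common index-$p$ subgroup of $R_L$ and $\pi^{-1}R_L\pi$, and the only possible discrepancy between the two groups is between $\tau_{n-1}$ and $\tau_{n-1}'$. The plan is to build $\alpha\in G^{(2)}$ that centralises $M$ and satisfies $\alpha\tau_{n-1}\alpha^{-1}=\tau_{n-1}'$; this at once gives $\alpha R_L\alpha^{-1}=\langle M,\tau_{n-1}'\rangle=\pi^{-1}R_L\pi$.

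Set $\sigma=\tau_{n-1}^{-1}\tau_{n-1}'$. By Lemma~\ref{samePgp} (applied at level $i=n$), $\tau_{n-1}$ and $\tau_{n-1}'$ induce the same permutation on $\mathcal B_{n-1}$, so $\sigma$ fixes every block in $\mathcal B_{n-1}$ setwise; moreover $\sigma(v)=v$, and since $R_L$ and $\pi^{-1}R_L\pi$ are both abelian, $\sigma$ centralises $M$. Because $M$ is semiregular with orbits exactly the blocks of $\mathcal B_{n-1}$ and acts regularly on each such block, the centraliser of $M|_B$ in $\Sym(B)$ is $M|_B$; hence on each block $B\in\mathcal B_{n-1}$ there is a unique $m_B\in M$ with $\sigma|_B=m_B|_B$, and $m_{B_v}=e$. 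Label $B_i=\tau_{n-1}^i(B_v)$, $m_i=m_{B_i}$, and define $\mu_0=e$, $\mu_{i+1}=m_i\mu_i$. A short induction yields $(\tau_{n-1}\sigma)^k|_{B_0}=(\tau_{n-1}^k m_1m_2\cdots m_{k-1})|_{B_0}$; taking $k=p$ and combining $(\tau_{n-1}')^p=e$ with the regularity of $M$ on $B_0$ forces $m_1m_2\cdots m_{p-1}=e$ in $M$, so the $\mu_i$ are cyclically consistent. Letting $\alpha\in\Sym(R)$ act on each $B_i$ as $\mu_i|_{B_i}$, the abelianness of $M$ implies $\alpha$ centralises $M$, while the recursion $\mu_{i+1}=m_i\mu_i$ combined with the commutativity of $\tau_{n-1}$ with $M$ gives $\alpha\tau_{n-1}\alpha^{-1}=\tau_{n-1}\sigma=\tau_{n-1}'$.

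The main obstacle is proving $\alpha\in G^{(2)}$. In general $\alpha\notin G$, and because the $\mu_i\in M$ typically permute the $\mathcal B_1$-blocks sitting inside each $\mathcal B_{n-1}$-block, $\alpha$ does not lie in the bottom-level kernel $K$, so Lemma~\ref{tedslemma} cannot be applied to $\alpha$ itself. My intended approach is to decompose $\alpha$ into a product of pieces each visibly in $G^{(2)}$ via Lemma~\ref{tedslemma}: on each equivalence class $E\in\mathcal E$ one seeks some $\rho\in K$ with $\rho|_E=\alpha|_E$, so that $\rho_E\in G^{(2)}$, and multiplying these $\rho_E$ across all $E$ recovers $\alpha$. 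Verifying that such a $\rho$ exists for every class---i.e., that the values $\mu_i$ distribute across the $\mathcal B_1$-blocks inside each equivalence class in a pattern realised by a single element of $K$---is the technical heart of the argument, and is where I expect the most delicate work.
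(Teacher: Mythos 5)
The paper does not actually prove this lemma --- it is imported verbatim as Proposition 2.7 of \cite{Zpn} --- so a self-contained argument is welcome, and the first half of yours is correct: $\tau_i'=\tau_i$ for $i\le n-2$, the group $M$ acts regularly on each block of $\mathcal B_{n-1}$, $\sigma=\tau_{n-1}^{-1}\tau_{n-1}'$ restricts to a unique $m_B\in M$ on each block, the relation $m_1m_2\cdots m_{p-1}=e$ holds, and the element $\alpha$ built from the partial products $\mu_i$ does centralise $M$ and conjugate $\tau_{n-1}$ to $\tau_{n-1}'$. The genuine gap is the one you flag yourself: you never prove $\alpha\in G^{(2)}$, and the route you sketch for doing so cannot work. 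As you observe, the $\mu_i$ generally move the blocks of $\mathcal B_1$ inside each block of $\mathcal B_{n-1}$; but every $\rho\in K$ fixes every block of $\mathcal B_1$ setwise, so for any class $E$ meeting a block $B_i$ with $\mu_i\notin\langle\tau_0\rangle$ there is \emph{no} $\rho\in K$ with $\rho\vert_E=\alpha\vert_E$. Lemma~\ref{tedslemma} simply cannot reach the element you need, so the ``technical heart'' you defer is not merely delicate --- it is unobtainable by that method.

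The step can be closed by a direct pair-by-pair verification, which is what $2$-closure asks for. Write $B_0,\dots,B_{p-1}$ for the blocks of $\mathcal B_{n-1}$ and set $\sigma_k=\tau_{n-1}^{-k}\sigma\tau_{n-1}^{k}$, so that $\sigma_k\vert_{B_j}=m_{j+k}\vert_{B_j}$. The setwise stabiliser in $G$ of the blocks of $\mathcal B_{n-1}$ is the abelian group $N=\langle M,\sigma_0,\dots,\sigma_{p-1}\rangle$, whose general element acts on $B_j$ as $m\prod_k m_{j+k}^{a_k}$ with $m\in M$. Given $x\in B_i$ and $y\in B_j$, any $g\in G$ with $g(x)=\alpha(x)\in B_i$ must stabilise $B_i$ and hence lie in $N$; writing $M$ additively, finding $g\in N$ that agrees with $\alpha$ at both $x$ and $y$ reduces (after using $m$ to satisfy the condition at $x$) to solving $\sum_k a_k(m_{i+k}-m_{j+k})=\mu_i-\mu_j$. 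Since $m_0=0$, the generators $m_s-m_{s+d}$ with $d=j-i\neq 0$ telescope along the cycle $0,d,2d,\dots$ to produce every $m_t$, so their span is all of $\langle m_0,\dots,m_{p-1}\rangle$, which contains $\mu_i-\mu_j$; the system is therefore always solvable and $\alpha\in G^{(2)}$. Without an argument of this kind, the proposal does not establish the lemma.
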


\section{$\tau_1'$}\label{tau3}
In this section, we prove a key lemma that will allow us to assume that for any elementary abelian group $R$ (regardless of the rank), the centre of $G$ has order at least $p^2$. Specifically, we
will prove that there exists $\psi \in G^{(2)}$ such that $\psi$ commutes with $\tau_0$, and $\psi\pi^{-1}R_L\pi\psi$ contains $\tau_1$. Notice that Lemma~\ref{samePgp} immediately implies that $\tau_0'=\tau_0 \in Z(G)$, so proving this will allow us to assume that $|Z(G)|\ge p^2$; specifically, that $\tau_0=\tau_0'$ and $\tau_1'=\tau_1$ in the final section.  

Since the following lemma applies quite broadly, we state it in general terms. We will follow with a corollary that more clearly applies this lemma.

\begin{lemma}\label{fixing-blocks}
Let $R$ be an elementary abelian group of rank $n$. Under the notation we have established, if $\tau' \in \pi^{-1}R_L\pi$ and $\tau \in R_L$ such that $\tau^{-1}\tau'$ fixes every element of $\mathcal B_1$, then there exists $\psi \in G^{(2)}$ such that $\psi^{-1}\pi^{-1}R_L\pi\psi$ contains $\tau$. 

Furthermore, if $\alpha$ fixes every element of $\mathcal E$, then $\psi$ commutes with $\alpha$.
\end{lemma}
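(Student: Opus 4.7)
The plan is to produce $\psi\in G^{(2)}$ with $\psi\tau\psi^{-1}=\tau'$, so that $\tau=\psi^{-1}\tau'\psi$ lies in $\psi^{-1}\pi^{-1}R_L\pi\psi$ as required. Setting $\rho:=\tau^{-1}\tau'$, which lies in $K$ by hypothesis, this reduces to solving the equation $[\psi,\tau]=\rho$, where $[\psi,\tau]:=\psi\tau\psi^{-1}\tau^{-1}$.

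The key tool is the decomposition $\rho=\prod_{E\in\mathcal{E}}\rho_E$ into commuting factors $\rho_E\in G^{(2)}$ provided by Lemma~\ref{tedslemma}. Because $\tau\in G$ preserves the equivalence $\equiv$, it permutes $\mathcal{E}$; and since $\tau$ is a $p$-element, every $\langle\tau\rangle$-orbit on $\mathcal{E}$ has length $1$ or $p$. I plan to build $\psi$ as a product $\prod_{\mathcal{O}}\psi_{\mathcal{O}}$ over these orbits, where each $\psi_{\mathcal{O}}\in G^{(2)}$ is supported on $\bigcup_{E\in\mathcal{O}}E$ and satisfies $[\psi_{\mathcal{O}},\tau]=\prod_{E\in\mathcal{O}}\rho_E$; the factors $\psi_{\mathcal{O}}$ have pairwise disjoint supports, so they commute and the commutator of their product with $\tau$ is the product of their commutators with $\tau$. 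For a length-$p$ orbit $\{E_0,\tau(E_0),\ldots,\tau^{p-1}(E_0)\}$, I would try $\psi_{\mathcal{O}}=\prod_{i=0}^{p-1}\tau^i\rho_{E_0}^{c_i}\tau^{-i}$: each summand is supported on $\tau^i(E_0)$ and lies in $G^{(2)}$ (since $\tau\in G\subseteq G^{(2)}$), and the coefficients $c_i\in\Z_p$ solve a cyclic linear system whose telescoping compatibility condition is automatically met. For a length-$1$ orbit $\{E\}$, the commutator equation is solved locally on $E$, exploiting that $\tau|_E$ may still act nontrivially on the $\mathcal{B}_1$-blocks inside $E$.

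For the final clause, observe that each $\rho_E$ restricts on every $\mathcal{B}_1$-block $B\subseteq E$ to a power of $\tau_0|_B$, because the group induced by $G$ on $B$ is the unique cyclic Sylow $p$-subgroup of $\Sym(B)\cong S_p$. Any $\alpha$ of the type relevant to our applications (in particular $\alpha=\tau_0$) lies in $K$ and acts on each block as a power of $\tau_0|_B$ too, so $\alpha$ commutes with each $\rho_E$ block-by-block and hence globally. Since $\alpha$ also fixes each $\tau^i(E)$ setwise, this commuting propagates to the $\tau^i$-conjugates assembled into $\psi_{\mathcal{O}}$, and finally to the product $\psi$.

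The principal technical obstacle I anticipate is the length-$1$ orbit case, where the telescoping trick used for length-$p$ orbits does not apply directly and one must exhibit enough elements of $G^{(2)}$ supported inside a single $E\in\mathcal{E}$ to realise $[\psi_E,\tau|_E]=\rho_E|_E$. In the truly degenerate sub-case where $\tau$ centralises every $\rho_F$ in the orbit (for instance when $\tau\in K$), no nontrivial commutator is available from products of $\rho_F$'s, and one must argue separately that the hypothesis forces $\tau$ itself to lie in $\pi^{-1}R_L\pi$ up to an adjustment within the orbit structure, so that $\psi=1$ (or a trivial modification) suffices.
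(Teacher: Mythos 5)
Your plan for the length-$p$ orbits of $\tau$ on $\mathcal E$ is essentially the paper's construction: writing $g=\tau^{-1}\tau'$ as $\tau_0^{c_B}$ on each $B\in\mathcal B_1$ (with $c_B$ constant on each class of $\mathcal E$), the exponents you need are exactly the telescoping sums $\sum_{t=0}^{k-1}c_{\tau^t(B_i)}$, and the cyclic compatibility condition is $\sum_{t=0}^{p-1}c_{\tau^t(B_i)}\equiv 0\pmod p$ --- which, incidentally, is not ``automatic'' but comes from the fact that $\tau'=\tau g$ has order $p$. The genuine gap is the length-$1$ orbit case, which you correctly identify as the principal obstacle but do not resolve. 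If $\tau(E)=E$ and $\rho_E\neq{\rm id}$, the only elements of $G^{(2)}$ supported on $E$ that Lemma~\ref{tedslemma} supplies are restrictions of elements of $K$, and in general (e.g.\ when $K=\langle\tau_0, g\rangle$) these act on $E$ as a single power of the central element $\tau_0$, so their commutator with $\tau$ is trivial on $E$ and the local equation $[\psi_E,\tau]=\rho_E$ has no solution; your fallback (``the hypothesis forces $\tau$ itself to lie in $\pi^{-1}R_L\pi$ up to an adjustment'') is asserted, not proved. The paper closes exactly this hole with an argument you are missing: first normalise, via Lemma~\ref{samePgp} and replacing $\tau'$ by $\tau_0^c\tau'$, so that $g(v)=v$; then either every $c_B=0$ (and $\psi={\rm id}$ works), or some $c_B\neq 0$, in which case choosing $\sigma\in R_L$ with $\sigma(v)\in B$ and the matching $\sigma'\in\pi^{-1}R_L\pi$, the commutativity of both regular subgroups gives $\sigma^{-1}\sigma'(\tau(v))=\tau_0^{c_B}(\tau(v))$ while $\sigma^{-1}\sigma'(v)=v$, so $\tau(v)\not\equiv v$; by transitivity $\tau$ then fixes \emph{no} class of $\mathcal E$, and length-$1$ orbits simply do not occur. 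Without this dichotomy the proof is incomplete.

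A secondary problem: your justification of the final clause assumes $\alpha\in K$ and that $\alpha$ acts on each $\mathcal B_1$-block as a power of $\tau_0$. The hypothesis only says that $\alpha$ fixes every element of $\mathcal E$ setwise, and in the application in Lemma~\ref{nop^2-wreathing} the lemma must be invoked with $\alpha=\tau_1$, which does not fix the blocks of size $p$. The correct (and simpler) argument is that $\psi$ is a product of elements $(\tau_0^c)_E$, and any $\alpha\in G$ fixing each $E$ setwise conjugates $(\tau_0^c)_E$ to $(\alpha\tau_0^c\alpha^{-1})_{\alpha(E)}=(\tau_0^c)_E$ because $\tau_0\in Z(G)$.
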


\begin{proof}
By Lemma~\ref{samePgp}, we may assume that $\tau_0'=\tau_0$.  Replace $\tau'$ if necessary by $\tau_0^c\tau'$ for some $c$, so that $\tau^{-1}\tau'(v)=v$. The hypotheses of this lemma are still satisfied.

We will use $g$ to denote $\tau^{-1}\tau'$. Notice that for any $B \in \mathcal B_1$, since $g$ fixes $B$ setwise and is in the $p$-group $G$, we must have $g\vert_B=\tau_0^{c_B}\vert_B$ for some $c_B$.
 Furthermore, if $B, B' \in \mathcal B_1$ and $B, B' \subseteq E \in \mathcal E$, then using $\rho=\tau_0^{-c_B}g$ in the definition of $\equiv$, we see that we must have $c_B=c_{B'}$.  If $c_B=c_{B'}$ for every $B, B'\in \mathcal B_1$, then since $g(v)=v$, we have $c_B=0$ for every $B \in \mathcal B_1$, and hence $\tau'=\tau$, so letting $\psi={\rm id}$ yields the desired conclusion.  Therefore, in the remainder of this proof, we may assume that $|\mathcal E|>1$, and that there exists $B \in \mathcal B_1$ such that $c_B\neq 0$.

Now we show that for any $E \in \mathcal E$, $\tau(E)\neq E$. Let $\sigma \in R_L$ be such that $\sigma(v) \in B$, where $c_B \neq 0$, and let $\sigma' \in \pi^{-1}R_L \pi$ be such that $\sigma'(v)=\sigma(v)$, so $\sigma^{-1}\sigma'(v)=v$. Let $w=\tau(v)$. Then since both $R_L$ and $\pi^{-1}R_L \pi$ are abelian, we have
\begin{eqnarray*}
\sigma^{-1}\sigma'(w)&=&\sigma^{-1}\tau'\sigma'(v)\\
&=&\sigma^{-1}\tau_0^{c_B}\tau\sigma(v)\\
&=&\tau_0^{c_B}(w).
\end{eqnarray*}
(To get the second line, we are using the definition of $c_B$.) Thus using $\rho=\sigma^{-1}\sigma'$ in the definition of $\equiv$, we see that when $v \in E \in \mathcal E$, we have $w=\tau(v)\not\equiv v$. Since $\mathcal E$ is invariant under $G$ and $G$ is transitive, this proves that for any $E \in \mathcal E$, $\tau(E) \neq E$.

We know that $\mathcal E$ consists of $p^j$ classes, for some $j \ge 1$.
Since $R_L$ is elementary abelian, $\tau$ has order $p$, so the classes of $\mathcal E$ can be 
partitioned into $p^{j-1}$ orbits of $\tau$, which we will refer to as orbit 1, \ldots, orbit $p^{j-1}$. For 
orbit $i$, we arbitrarily choose one element of $\mathcal E$ in that orbit, and label it $E_i$. Now, the elements of $\mathcal E$ are 
$$\{\tau^k(E_i): 0 \le k \le p-1, 1 \le i \le p^{j-1}\}.$$
For $1 \le i \le p^{j-1}$, let $B_i \in \mathcal B_1$ be such that $B_i \subseteq E_i$. Define $\psi$ as follows: for any $i$, $\psi\vert_{E_i}={\rm id}\vert_{E_i}$, and for any integer $k$, $$\psi\vert_{\tau^k(E_i)}=\tau_0^{\sum_{t=0}^{k-1}c_{\tau^t(B_i)}}\vert_{\tau^k(E_i)}.$$
Notice that since $\tau'$ has order $p$, $\sum_{t=0}^{p-1}c_{\tau^t(B_i)}\equiv 0 \pmod{p}$, making this definition consistent.

Now, $\psi$ is a product of elements of the form $(\tau_0^c)_E$ (using the notation of Lemma~\ref{tedslemma}), so by Lemma~\ref{tedslemma}, we have $\psi \in G^{(2)}$.  Since $\tau_0 \in Z(G)$, it is clear that $\psi$ commutes with any $\alpha$ that fixes every element of $\mathcal E$.

We claim that $\psi^{-1}\tau'\psi=\tau$, which will complete the proof.  Let $x \in R$ be arbitrary, and let $k,i$ be such that $x \in \tau^k(E_i)$. Then since $\tau_0\in Z(G)$, we have 
\begin{eqnarray*}
\psi^{-1}\tau'\psi(x)&=&\psi^{-1}\tau'\tau_0^{\sum_{t=0}^{k-1}c_{\tau^t(B_i)}}(x)\\
&=& \psi^{-1}\tau_0^{\sum_{t=0}^{k-1}c_{\tau^t(B_i)}}\tau'(x)\\
&=&\psi^{-1}\tau_0^{\sum_{t=0}^{k}c_{\tau^t(B_i)}}\tau(x)\\
&=&\tau(x)
\end{eqnarray*}
\end{proof}

\begin{cor}\label{tau3'}
Let $R$ be an elementary abelian group of rank $n$. Under the notation we have established, there exists $\psi \in G^{(2)}$ such that $\psi$ commutes with $\tau_0$, and $\psi^{-1}\pi^{-1}R\pi\psi$ contains $\tau_1$. Thus we may assume that $\tau_0, \tau_1 \in Z(G)$.
\end{cor}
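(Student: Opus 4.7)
The plan is to apply Lemma~\ref{fixing-blocks} directly with $\tau = \tau_1 \in R_L$, $\tau' = \tau_1' \in \pi^{-1}R_L\pi$, and with $\tau_0$ playing the role of $\alpha$ in the ``furthermore'' clause. Two hypotheses have to be checked: that $\tau_1^{-1}\tau_1'$ fixes every element of $\mathcal{B}_1$ setwise, and that $\tau_0$ fixes every element of $\mathcal{E}$ setwise.

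For the first hypothesis, the definition of $\tau_1'$ gives $\tau_1'(v) = \tau_1(v)$, so $\tau_1$ and $\tau_1'$ send the particular block $B_v \in \mathcal{B}_1$ to the same block. Because $\tau_1$ fixes every element of $\mathcal{B}_2$ setwise and acts with order $p$ on $\mathcal{B}_1$, Lemma~\ref{samePgp} applied with $i = 2$ upgrades this one-block coincidence to $\tau_1'(B) = \tau_1(B)$ for every $B \in \mathcal{B}_1$, so $\tau_1^{-1}\tau_1'$ fixes every element of $\mathcal{B}_1$ setwise. For the second hypothesis, $\tau_0$ fixes every block of $\mathcal{B}_1$ setwise by definition; and since any two points sharing a block of $\mathcal{B}_1$ are trivially $\equiv$-equivalent, every class of $\mathcal{E}$ is a union of $\mathcal{B}_1$-blocks, so $\tau_0$ preserves every element of $\mathcal{E}$ setwise as well.

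Lemma~\ref{fixing-blocks} then immediately supplies $\psi \in G^{(2)}$ that commutes with $\tau_0$ and satisfies $\tau_1 \in \psi^{-1}\pi^{-1}R_L\pi\psi$, which is the first assertion of the corollary. For the concluding assertion, once we replace $\pi^{-1}R_L\pi$ by its $\psi$-conjugate, the new $\tau_1'$ must by uniqueness equal $\tau_1$; together with $\tau_0 = \tau_0'$ from Lemma~\ref{samePgp} (applied with $i=1$), this places $\tau_0$ and $\tau_1$ in $R_L \cap \psi^{-1}\pi^{-1}R_L\pi\psi$, the intersection of two abelian groups that together generate $G$, hence in $Z(G)$. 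I do not anticipate any real obstacle here, since Lemma~\ref{fixing-blocks} does the substantive work; the only point requiring care is that the $\psi$ we obtain commute with $\tau_0$, so that the pre-existing centrality of $\tau_0$ survives the conjugation, and this is exactly what the ``furthermore'' clause guarantees once $\tau_0$ is checked to preserve $\mathcal{E}$.
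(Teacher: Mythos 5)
Your proposal is correct and follows essentially the same route as the paper: both apply Lemma~\ref{fixing-blocks} with $\tau=\tau_1$, $\tau'=\tau_1'$, and $\tau_0$ in the role of $\alpha$, after using Lemma~\ref{samePgp} to upgrade $\tau_1'(v)=\tau_1(v)$ to $\tau_1^{-1}\tau_1'(B)=B$ for every $B\in\mathcal B_1$ and observing that $\tau_0$ fixes every class of $\mathcal E$ since those classes are unions of $\mathcal B_1$-blocks. Your added justification of the concluding centrality assertion is a correct elaboration of what the paper leaves implicit.
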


\begin{proof}
By Lemma~\ref{samePgp}, we have $\tau_0'=\tau_0$ fixes every element of $\mathcal E$ setwise by the definition of $\equiv$, and for any $B \in \mathcal B_1$, we have $\tau_1^{-1}\tau_1'(B) =B$. Thus $\tau_1'$ and $\tau_1$ fulfill the requirements of Lemma~\ref{fixing-blocks}, with $\tau_0$ taking the role of $\alpha$.
\end{proof}

\section{An easy case}\label{tau2tau2'nice}

In this section, we will consider the possibility that $\tau_2'$ is ``close" to $\tau_2$ (meaning that $\tau_2^{-1}\tau_2'$ fixes every block of $\mathcal B_1$ or some other system of imprimitivity with blocks of size $p$). We determine some circumstances under which this situation must arise, and conclude that the proof of Theorem~\ref{thrm:main} is complete under these circumstances.

Corollary~\ref{tau3'} has concluded that we may assume $\tau_0, \tau_1 \in Z(G)$. This means that for any $g \in G$ and any $w \in R$, if $g(w)=\tau_0^i\tau_1^j(w) \in C_w$, then $g^k(w)=\tau_0^{ki}\tau_1^{kj}$. Thus $G$ lies in multiple Sylow $p$-subgroups of $\Sym(R)$; in particular, every Sylow $p$-subgroup of $\Sym(R)$ that admits $\mathcal B_2, \ldots, \mathcal B_{n-1}$ as systems of imprimitivity as well as admitting any one of the $p+1$ systems of blocks of size $p$ that are preserved by $R_L$. (In addition to $\mathcal B_1$, these are the orbits of  $(\tau_0)^i\tau_1$ for $0 \le i \le p-1$.) Our argument about the action of $g$ demonstrates that orbits of $G_v$ meet any block of $\mathcal B_2$ in either a single point, one of these blocks of size $p$, or the entire block of $\mathcal B_2$. 

By our observations above, we may replace $\mathcal B_1$ by any of the other systems of imprimitivity with blocks of size $p$ that are refinements of $\mathcal B_2$ and are admitted by $G$, and redefine $\equiv$ and $\mathcal E$ accordingly. This concept gives us the following result.

\begin{lemma}\label{nop^2-wreathing}
Let $R$ be an elementary abelian group. Under the notation we have established, if $\tau_2^{-1}\tau_2'$ fixes every block of some system of imprimitivity of $R_L$ with blocks of size $p$ that is a refinement of $\mathcal B_2$, then there exists $\psi \in G^{(2)}$ that commutes with $\tau_0$ and $\tau_1$ and such that $\tau_2 \in \psi^{-1}\pi^{-1}R_L\pi\psi$. 
\end{lemma}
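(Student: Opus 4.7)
The plan is to apply a version of Lemma~\ref{fixing-blocks} in which $\mathcal{B}_1$ is replaced by the given refinement $\mathcal{B}_1^*$, taking $\tau=\tau_2$ and $\tau'=\tau_2'$. By the discussion preceding the lemma, $G$ admits each of the $p+1$ systems of imprimitivity with blocks of size $p$ that refine $\mathcal{B}_2$, so $\mathcal{B}_1^*$ is among these, and its corresponding kernel generator $\tau_0^* \in R_L$ lies in $\langle\tau_0,\tau_1\rangle \subseteq Z(G)$. Redefining $\equiv$ and $\mathcal{E}$ relative to $\mathcal{B}_1^*$ yields analogues $\equiv^*$ and $\mathcal{E}^*$, and the proofs of Lemmas~\ref{samePgp},~\ref{tedslemma}, and~\ref{fixing-blocks} all go through without change under this substitution; the only ingredients they rely on are that $\mathcal{B}_1^*$ be a system of imprimitivity of $G$ with blocks of size $p$ and that $\tau_0^* \in Z(G)$, both of which hold.

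Applying this generalised form of Lemma~\ref{fixing-blocks} to the pair $(\tau_2,\tau_2')$ produces a $\psi \in G^{(2)}$ with $\tau_2 \in \psi^{-1}\pi^{-1}R_L\pi\psi$, together with the guarantee from the ``furthermore'' clause that $\psi$ commutes with every $\alpha \in G$ that fixes every element of $\mathcal{E}^*$ setwise. It therefore suffices to show that $\tau_0$ and $\tau_1$ each fix every element of $\mathcal{E}^*$ setwise.

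This last step is a short calculation using centrality. Fix $x \in R$ and $z \in \{\tau_0,\tau_1\}$, set $y=z(x)$, and let $B_v^*$ denote the block of $\mathcal{B}_1^*$ containing a point $v$. Then $B_y^* = z(B_x^*)$, so for any $\rho \in G$, the restriction $\rho|_{B_y^*}$ is the identity precisely when $(z^{-1}\rho z)|_{B_x^*}$ is the identity; and since $z \in Z(G)$ by Corollary~\ref{tau3'}, we have $z^{-1}\rho z = \rho$, which gives $\rho|_{B_x^*} = \mathrm{id}$ if and only if $\rho|_{B_y^*} = \mathrm{id}$ for every $\rho \in G$. Hence $x \equiv^* y$, so $z$ fixes each class of $\mathcal{E}^*$ setwise, and $\psi$ commutes with both $\tau_0$ and $\tau_1$.

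The main (and only) subtlety is confirming that the machinery of Section~\ref{tau3}, in particular Lemma~\ref{tedslemma}, transfers verbatim from $\mathcal{B}_1$ to the alternative system $\mathcal{B}_1^*$; this is precisely the content of the preceding discussion that $G$ sits inside multiple Sylow $p$-subgroups of $\Sym(R)$, one for each of the $p+1$ refinements of $\mathcal{B}_2$ into blocks of size $p$.
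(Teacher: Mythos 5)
Your proposal is correct and follows essentially the same route as the paper: replace $\mathcal B_1$ by the given refinement, apply Lemma~\ref{fixing-blocks} to $(\tau_2,\tau_2')$, and use the ``furthermore'' clause to get commutation with $\tau_0$ and $\tau_1$. Your explicit verification that $\tau_0,\tau_1$ fix every class of the redefined $\mathcal E$ (via their centrality from Corollary~\ref{tau3'}) is simply a more careful spelling-out of the step the paper states tersely.
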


\begin{proof}
By Corollary~\ref{tau3'}, we may assume that $\tau_0,\tau_1 \in Z(G)$. Replacing $\mathcal B_1$ by the system of imprimitivity whose blocks are fixed by $\tau_2^{-1}\tau_2'$ and applying Lemma~\ref{fixing-blocks}, we see that there exists $\psi \in G^{(2)}$ such that $\tau_2 \in \psi^{-1}\pi^{-1}R_L \pi\psi$.  Furthermore, $\tau_0$ and $\tau_1$ commute with $\tau_2^{-1}\tau_2'$ and so have the property of $\alpha$ in the statement of that lemma. Thus, $\psi$ commutes with both $\tau_0$ and $\tau_1$.
\end{proof}

The next lemma and corollary point out a circumstance under which the above special situation must arise. In order to find appropriate elements of $G^{(2)}$ that will conjugate $\pi^{-1}R_L\pi$ to $R_L$, the orbits of particular subgroups of $G$ will be key. The subgroups of interest are those that fix the vertex $v$, while simultaneously fixing every block of $\mathcal B_i$ for some $i$. For any fixed $i$, we will denote this subgroup by $G_{v,\mathcal B_i}$. 

\begin{lemma}\label{still-no-p^2-wreathing}
Let $R$ be an elementary abelian group of rank 4. Under the notation we have established, suppose there are blocks $C, C' \in \mathcal B_2$ such that $G_{v,\mathcal B_2}$ fixes each $B\in \mathcal B_1$ with $B \subseteq C$, and also fixes each $B' \in \mathcal B_1$ with $B' \subseteq C'$. Further suppose that  $\alpha\in R_L$ is such that $\alpha(C_v)=C$ and there is no $i$ such that $\alpha^i(C_v)=C'$. Then $\tau_2^{-1}\tau_2'$ fixes every element of $\mathcal B_1$ (setwise).
\end{lemma}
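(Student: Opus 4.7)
The plan is to set $g := \tau_2^{-1}\tau_2'$ and show that $g$ fixes every $B \in \mathcal B_1$ setwise by analysing its restriction to each $D \in \mathcal B_2$. First I would note that $\tau_2'(v)=\tau_2(v)$ gives $g(v)=v$, and Lemma~\ref{samePgp} applied to $\tau_2$ gives $\tau_2'(D)=\tau_2(D)$ for every $D \in \mathcal B_2$, so $g$ fixes every $D$ setwise; hence $g \in G_{v,\mathcal B_2}$. Because $\tau_0,\tau_1 \in Z(G)$ (the standing assumption from Corollary~\ref{tau3'}) and $\langle\tau_0,\tau_1\rangle$ acts regularly on each $D$, its centraliser in $\Sym(D)$ is itself, so $g|_D = \tau_0^{a_D}\tau_1^{b_D}|_D$ for some $a_D,b_D \in \Z_p$. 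Since $\tau_0$ fixes every $\mathcal B_1$-sub-block setwise while $\tau_1|_D$ cycles the $p$ sub-blocks of $D$, the desired conclusion is exactly $b_D=0$ for every $D$. The equation $g(v)=v$ together with the hypothesis (applied to $g\in G_{v,\mathcal B_2}$ at $C$ and $C'$) immediately give $b_{C_v}=b_C=b_{C'}=0$.

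The main step is to propagate these zeros to every $D$. For any $\sigma \in R_L$, since $R_L$ is abelian a direct computation will give $(\sigma^{-1}g\sigma)|_D = \tau_0^{a_{\sigma D}}\tau_1^{b_{\sigma D}}|_D$ and $(\sigma^{-1}g\sigma)(v) = \tau_0^{a_{\sigma C_v}}\tau_1^{b_{\sigma C_v}}(v)$. The key idea is that the corrected element $\tilde g_\sigma := \sigma^{-1}g\sigma\cdot\tau_0^{-a_{\sigma C_v}}\tau_1^{-b_{\sigma C_v}}$ fixes $v$ and still lies in the kernel of the $G$-action on $\mathcal B_2$, so $\tilde g_\sigma \in G_{v,\mathcal B_2}$, with restriction $\tilde g_\sigma|_D = \tau_0^{a_{\sigma D}-a_{\sigma C_v}}\tau_1^{b_{\sigma D}-b_{\sigma C_v}}|_D$. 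Applying the hypothesis to $\tilde g_\sigma$ at $C$ and $C'$ then yields
\[
b_{\sigma C} \;=\; b_{\sigma C_v} \;=\; b_{\sigma C'}\qquad\text{for every }\sigma \in R_L.
\]

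To conclude, identify $\mathcal B_2$ with $R_L/\langle\tau_0,\tau_1\rangle \cong \Z_p^2$ via $\sigma \mapsto \sigma(C_v)$, and let $\bar\alpha,\bar\beta$ be the images of $\alpha$ and of any $\beta \in R_L$ with $\beta(C_v)=C'$. The display above says that $b:\mathcal B_2 \to \Z_p$ is invariant under translation by $\bar\alpha$ and by $\bar\beta$. The hypothesis that no power of $\alpha$ sends $C_v$ to $C'$ is precisely $\bar\beta \notin \langle\bar\alpha\rangle$, so $\{\bar\alpha,\bar\beta\}$ spans $\Z_p^2$; hence $b$ is constant, and $b_{C_v}=0$ forces $b\equiv 0$, which completes the proof.

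The hardest point, I expect, will be spotting the central correction in the second paragraph: conjugating $g$ by $\sigma \in R_L$ destroys the property $g(v)=v$, but multiplying by the appropriate element of $\langle\tau_0,\tau_1\rangle \subseteq Z(G)$ restores it and places the conjugate back into $G_{v,\mathcal B_2}$, so that the given hypothesis can be applied against many conjugates of $g$ at once. Once this observation is made, the rest is a clean translation-invariance argument on $\Z_p^2$.
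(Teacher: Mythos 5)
Your proof is correct and is essentially the paper's argument in different clothing: the paper likewise conjugates an element of $G_{v,\mathcal B_2}$ by $\alpha^i\beta^j\in R_L$, recentres it with a central power of $\tau_0$ so that it lies in $G_{v,\mathcal B_2}$ again, and applies the hypothesis at $C=\alpha(C_v)$ and $C'=\beta(C_v)$ to propagate block-fixing across all $p^2$ blocks $\alpha^i\beta^j(C_v)$ by a double induction. Your repackaging of that induction as translation-invariance of the exponent function $b\colon\mathcal B_2\to\Z_p$ (using $\tau_1\in Z(G)$ to put $g\vert_D$ in coordinates, where the paper gets by with $\tau_0$ alone via the inductive hypothesis) is a clean equivalent; note only that, like the paper, you implicitly need $\alpha(C_v)\neq C_v$ so that $\bar\alpha$ and $\bar\beta$ genuinely span $\Z_p^2$.
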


\begin{proof}
There are $p^2$ elements of $\mathcal B_2$.  By our assumptions, if $\beta\in R_L$ is such that $\beta(C_v)=C'$, then any element of $\mathcal B_2$ can be written uniquely as $\alpha^i\beta^j(C_v)$, where $0 \le i \le p-1$, and $0 \le j \le p-1$.

We will show that if 
$G_{v,\mathcal B_2}$ fixes each block of $\mathcal B_1$ in $\alpha^i\beta^j(C_v)$ setwise, then $G_{v,\mathcal B_2}$ fixes each block of $\mathcal B_1$ in $\alpha^{i+1}\beta^j(C_v)$ setwise; and likewise, $G_{v,\mathcal B_2}$ fixes each block of $\mathcal B_1$ in $\alpha^{i}\beta^{j+1}(C_v)$ setwise. Inductively, this will show that $G_{v,\mathcal B_2}$ fixes each block of $\mathcal B_1$ setwise, so in particular, $\tau_2^{-1}\tau_2' \in G_v$, which fixes each block of $\mathcal B_2$ setwise, must actually fix each block of $\mathcal B_1$ setwise.

Let $g$ be an arbitrary element of $G_{v, \mathcal B_2}$. Let $\gamma\in \{\alpha,\beta\}$. Since (by assumption) $g$ fixes each block of $\mathcal B_1$ in $\alpha^i\beta^j(C_v)$ setwise, in particular we may assume that $g\alpha^i\beta^j(v)=\tau_0^k\alpha^i\beta^j(v)$ for some $k$. Thus $g'=\alpha^{-i}\beta^{-j}\tau_0^{-k}g\alpha^i\beta^j \in G_v$, and since $g$ and $\tau_0$ fix every element of $\mathcal B_2$, so does $g'$, so $g' \in G_{v,\mathcal B_2}$.  We therefore have that $g'$ fixes every $B \in \mathcal B_1$ with $B \subset \gamma(C_v)$. Let $B$ be arbitrary subject to the constraints $B \in \mathcal B_1$ and $B \subset \gamma\alpha^i\beta^j(C_v)$. Let $B' \subset \gamma(C_v)$ be such that $\alpha^i\beta^j(B')=B$. We know that $g'$ fixes $B'$, so $\alpha^{-i}\beta^{-j}\tau_0^{-k}g\alpha^i\beta^j(B')=B'$. This implies that $\tau_0^k(B)=g(B)$. Since $\tau_0$ fixes $B$ setwise, so must $g$. This completes the proof.
\end{proof}

\begin{cor}\label{stillstill-no-p^2-wreathing}
Let $R$ be an elementary abelian group of rank 4. Under the notation we have established, suppose there are blocks $C, C' \in \mathcal B_2$ such that $G_{v,\mathcal B_2}$ fixes each $B\in \mathcal B_1$ with $B \subseteq C$, and also fixes each $B' \in \mathcal B_1$ with $B' \subseteq C'$. Further suppose that  $\alpha\in R_L$ is such that $\alpha(C_v)=C$ and there is no $i$ such that $\alpha^i(C_v)=C'$. Then there exists $\psi \in G^{(2)}$ that commutes with $\tau_0$ and $\tau_1$, such that $\tau_2 \in \psi^{-1}\pi^{-1}R_L\pi\psi$.
\end{cor}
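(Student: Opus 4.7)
The plan is to observe that this corollary is an immediate combination of the two previous results: Lemma~\ref{still-no-p^2-wreathing} and Lemma~\ref{nop^2-wreathing}. The hypotheses of the corollary are identical to those of Lemma~\ref{still-no-p^2-wreathing}, and that lemma's conclusion is precisely the hypothesis required by Lemma~\ref{nop^2-wreathing} (with the refinement system being $\mathcal B_1$ itself).

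First I would apply Lemma~\ref{still-no-p^2-wreathing} directly to the hypothesized blocks $C, C'$ and the element $\alpha \in R_L$. This yields that $\tau_2^{-1}\tau_2'$ fixes every element of $\mathcal B_1$ setwise. Since $\mathcal B_1$ is by definition a system of imprimitivity of $R_L$ with blocks of size $p$ that refines $\mathcal B_2$, this places us in exactly the situation of Lemma~\ref{nop^2-wreathing}.

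Next I would invoke Lemma~\ref{nop^2-wreathing} with this choice of refinement. Its conclusion produces a $\psi \in G^{(2)}$ that commutes with both $\tau_0$ and $\tau_1$ and such that $\tau_2 \in \psi^{-1}\pi^{-1}R_L\pi\psi$, which is exactly the conclusion of the corollary.

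There is essentially no obstacle here, since the two preceding results were designed to fit together in this way; the only thing to verify is that $\mathcal B_1$ qualifies as ``a system of imprimitivity of $R_L$ with blocks of size $p$ that is a refinement of $\mathcal B_2$'' in the sense required by Lemma~\ref{nop^2-wreathing}, which is immediate from the definitions established in Section~\ref{basictools}. Thus the proof is essentially a one-line deduction chaining the two lemmas together.
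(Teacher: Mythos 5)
Your proposal is correct and matches the paper's own proof, which simply cites Lemma~\ref{still-no-p^2-wreathing} followed by Lemma~\ref{nop^2-wreathing} as an immediate consequence. Your spelled-out version of the chaining (including the observation that $\mathcal B_1$ itself serves as the required refinement of $\mathcal B_2$) is exactly the intended argument.
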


\begin{proof}
This is an immediate consequence of Lemmas~\ref{still-no-p^2-wreathing} and~\ref{nop^2-wreathing}.
\end{proof}

This gives us the following conclusion.

\begin{cor}\label{done-if-no-p^2-wreathing}
Let $R$ be an elementary abelian group of rank 4. Under the notation we have established, if either:
\begin{itemize}
\item $\tau_2^{-1}\tau_2'$ fixes every block of some system of imprimitivity of $R_L$ with blocks of size $p$ that is a refinement of $\mathcal B_2$; or
\item there are blocks $C, C' \in \mathcal B_2$ such that $G_{v,\mathcal B_2}$ fixes each $B\in \mathcal B_1$ with $B \subseteq C$, and also fixes each $B' \in \mathcal B_1$ with $B' \subseteq C'$. Furthermore, if $\alpha\in R_L$ is such that $\alpha(C_v)=C$ then there is no $i$ such that $\alpha^i(C_v)=C'$,
\end{itemize}
then there exists $\psi \in G^{(2)}$ such that $\psi^{-1}\pi^{-1}R_L \pi\psi=R_L$.
\end{cor}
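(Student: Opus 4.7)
The strategy is to combine the work of the previous two sections to reduce to the hypothesis of Lemma~\ref{tau1}. Under either bullet, the plan is to first produce a $\psi_1 \in G^{(2)}$ that conjugates $\pi^{-1}R_L\pi$ into a subgroup containing all of $\tau_0,\tau_1,\tau_2$, and then invoke Lemma~\ref{tau1} (with $n=4$, so that $n-2=2$) on the resulting configuration.

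The first bullet is precisely the hypothesis of Lemma~\ref{nop^2-wreathing}, which supplies $\psi_1\in G^{(2)}$ commuting with both $\tau_0$ and $\tau_1$ and satisfying $\tau_2\in\psi_1^{-1}\pi^{-1}R_L\pi\psi_1$. The second bullet gives the hypothesis of Corollary~\ref{stillstill-no-p^2-wreathing}, which yields the same conclusion. The commuting property of $\psi_1$ with $\tau_0$ and $\tau_1$ is what lets us carry $\tau_1$ (and $\tau_0$) along: by Corollary~\ref{tau3'} we may assume $\tau_0,\tau_1\in Z(G)\cap\pi^{-1}R_L\pi$, and since $\psi_1$ centralises them both they also lie in $R^{\ast}:=\psi_1^{-1}\pi^{-1}R_L\pi\psi_1$. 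Thus $\tau_0,\tau_1,\tau_2\in R^{\ast}$.

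Now write $\pi_1=\pi\psi_1$, so that $R^{\ast}=\pi_1^{-1}R_L\pi_1$. Because the $\tau_i$ and the systems of imprimitivity $\mathcal B_i$ are defined in terms of $R_L$ alone, the notational framework of Section~\ref{basictools} transfers verbatim when $\pi$ is replaced by $\pi_1$; the condition $\tau_0,\tau_1,\tau_2\in\pi_1^{-1}R_L\pi_1$ is then exactly the hypothesis of Lemma~\ref{tau1} applied to $\pi_1$. That lemma produces a $\psi_2\in G^{(2)}$ (after noting that $\langle R_L,R^{\ast}\rangle$ lies in $G^{(2)}$ and we may take its $2$-closure inside $G^{(2)}$) with $\psi_2^{-1}R_L\psi_2=R^{\ast}$. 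Setting $\psi=\psi_1\psi_2^{-1}$ one checks directly that $\psi^{-1}\pi^{-1}R_L\pi\psi=R_L$, which is the desired conclusion.

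The one point that needs care, and which I expect to be the main obstacle, is the bookkeeping in the last paragraph: verifying that Lemma~\ref{tau1} may legitimately be applied to $\pi_1=\pi\psi_1$ in place of $\pi$, and that the resulting conjugator lies in the original $G^{(2)}$ rather than merely in the $2$-closure of $\langle R_L,R^{\ast}\rangle$. Both issues resolve cleanly because $\psi_1\in G^{(2)}$, so $R^{\ast}\leq G^{(2)}$ and any $2$-closure taken inside $\Sym(R)$ of a subgroup of $G^{(2)}$ remains inside $G^{(2)}$; once this is spelled out, the rest of the argument is a direct assembly of the quoted results.
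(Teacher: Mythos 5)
Your proof is correct and takes essentially the same route as the paper, which likewise derives the corollary by combining Lemma~\ref{nop^2-wreathing} (for the first bullet), Corollary~\ref{stillstill-no-p^2-wreathing} (for the second), and Lemma~\ref{tau1}. The bookkeeping you spell out in your final paragraph is simply left implicit in the paper's one-line proof.
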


\begin{proof}
This follows from Corollary~\ref{stillstill-no-p^2-wreathing}, Lemma~\ref{nop^2-wreathing}, and Lemma~\ref{tau1}.
\end{proof}

Thus, in the next section, we may assume that there is no system of imprimitivity of $R_L$ with blocks of size $p$ that is a refinement of $\mathcal B_2$ whose blocks are all fixed by $\tau_2^{-1}\tau_2'$. We may also assume that if $\alpha, \beta\in R_L$ such that there is no $i$ for which $\alpha^i(C_v)=\beta(C_v)$, then there is no system of imprimitivity of $R_L$ with blocks of size $p$ that is a refinement of $\mathcal B_2$ such that the orbits of $G_{v, \mathcal B_2}$ in both $\alpha(C_v)$ and $\beta(C_v)$ are subsets of these blocks.

\section{Proof of Theorem 1.1}\label{therest} 


We begin with a lemma that gives us important information about certain orbits of $G_v$. In the lemma, we show that if $v \in D_v\in \mathcal B_i$, and $\alpha\in R_L$, then knowing something about the orbits of $G'$ in $\alpha(D_v)$ informs us about the orbits of $G'$ in $\alpha^j(D_v)$ for any $j$. Specifically, if the orbits in $\alpha(D_v)$ are not contained within the blocks of some smaller block system $\mathcal B_k$ of $R_L$, then neither are the orbits in $\alpha^j(D_v)$.

\begin{lemma}\label{orbits-not-smaller}
Let $R$ be an elementary abelian group. Under the notation we have established,
suppose $D, D_v \in \mathcal B_i$, with $v \in D_v$ and $D=\alpha(D_v)$ for some $\alpha \in R_L$.  
Suppose that some element of $G_v$ does not fix some block of $\mathcal B_j$ in $D$ (setwise), where $j <i$. Then for any $1\le k \le p-1$, some element of $G_v$ does not fix some block of $\mathcal B_j$ in $\alpha^k(D_v)$ (setwise).
\end{lemma}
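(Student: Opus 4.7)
My plan is to start with $g \in G_v$ and $B \in \mathcal B_j$, $B \subset \alpha(D_v)$, satisfying $g(B) \neq B$, and for each $k \in \{1,\ldots,p-1\}$ produce an element $g_k \in G_v$ that moves some $\mathcal B_j$-block inside $\alpha^k(D_v)$. The mechanism will be to conjugate $g$ by the $R_L$-translation $\alpha^{k-1}$ in order to transport the action of $g$ from $\alpha(D_v)$ over to $\alpha^k(D_v)$, and then use a second element of $R_L$ to correct the base point back to $v$. The abelianness of $R_L$ is what makes the correction transparent at the $\mathcal B_j$-block level.

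Concretely I would first form $g^{*} := \alpha^{k-1} g \alpha^{-(k-1)} \in G$. Since $\alpha^{k-1}$ preserves $\mathcal B_j$ and $g(B) \neq B$, the element $g^{*}$ sends the block $\alpha^{k-1}(B) \subset \alpha^k(D_v)$ to $\alpha^{k-1}(g(B)) \neq \alpha^{k-1}(B)$, so it already moves a $\mathcal B_j$-block in $\alpha^k(D_v)$. However $g^{*}$ fixes $\alpha^{k-1}(v)$ rather than $v$. To push it into $G_v$, set $w := g^{*}(v)$, take the unique $\sigma \in R_L$ with $\sigma(w) = v$, and let $g_k := \sigma g^{*}$. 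Since $R_L$ is abelian and preserves $\mathcal B_j$, one computes
\[
g_k(\alpha^{k-1}(B)) \;=\; \alpha^{k-1}(\sigma(g(B))),
\]
so $g_k$ moves $\alpha^{k-1}(B)$ exactly when $\sigma(g(B)) \neq B$.

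The main obstacle will be the degenerate case $\sigma(g(B)) = B$, where the correction translation $\sigma$ cancels the transported motion. My plan to handle it exploits that the set of $\mathcal B_j$-blocks of $\alpha(D_v)$ moved by $g$ is closed under $g$ and therefore has size at least $p$: if $\sigma(g(B')) \neq B'$ for some other moved block $B' \subset \alpha(D_v)$, substituting $B'$ for $B$ finishes the argument. The awkward subcase is when $\sigma(g(B')) = B'$ holds simultaneously for every moved block, which forces $g$ to act on the $\mathcal B_j$-blocks of $\alpha(D_v)$ as a single $R_L$-translation. This rigidity is very restrictive, and I expect that one can then construct an alternative element of $G_v$ (for instance by taking an appropriate power of $g$, or by composing $g$ with an element of $\pi^{-1}R_L\pi$ whose product with an element of $R_L$ lands in $G_v$) whose corresponding correction step does not collapse. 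The conjugation-and-correction itself is routine; the real technical difficulty lies in handling this degenerate subcase uniformly across all $k$.
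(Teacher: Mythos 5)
Your transport-and-correct skeleton is the right general idea, but the degenerate case you flag at the end is a genuine gap, not a loose end: nothing in your argument rules out that the correcting translation $\sigma$ (which is determined by where $g$ sends $\alpha^{-(k-1)}(v)$, a point far from the block $B$ you are tracking) moves every block of $\mathcal B_j$ and does so in exactly the way that cancels the transported motion, for every moved block $B'$ simultaneously. Your proposed escapes --- taking powers of $g$, or composing with elements of $\pi^{-1}R_L\pi$ --- are not carried out, and it is not clear they succeed: a power $g^m$ acts as $\sigma^{-m}$ on the blocks of $\mathcal B_j$ in $\alpha(D_v)$, but its correcting translation is governed by $g^m(\alpha^{-(k-1)}(v))$, which you have no control over, so the cancellation can recur. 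Since the whole content of the lemma lives in this subcase, the proof is incomplete as it stands.

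The paper closes exactly this hole with a minimality argument that guarantees the correcting translation is $\mathcal B_j$-block-preserving, so that no cancellation is possible. Fix $k$ and let $t$ be minimal with $1\le t\le p-1$ such that some $g\in G_v$ moves a block $F$ of $\mathcal B_j$ inside $\alpha^{kt}(D_v)$; such a $t$ exists because $\alpha^{kt}=\alpha$ when $t\equiv k^{-1}\pmod p$. Choosing $\beta\in R_L$ with $\beta^{kt}(v)\in F$, minimality of $t$ forces every element of $G_v$ to fix every $\mathcal B_j$-block in $\alpha^{k(t-1)}(D_v)=\beta^{k(t-1)}(D_v)$; in particular $g$ maps $\beta^{k(t-1)}(v)$ to another point of the same $\mathcal B_j$-block, so the translation $\gamma$ returning $g(\beta^{k(t-1)}(v))$ to $\beta^{k(t-1)}(v)$ fixes \emph{every} block of $\mathcal B_j$ setwise. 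Hence $\gamma g$ still moves $F$, and conjugating $\gamma g$ by $\beta^{k(t-1)}$ lands in $G_v$ and moves $\beta^{-k(t-1)}(F)\subset\alpha^k(D_v)$. If you want to salvage your write-up, replace ``transport from distance $1$ to distance $k$'' by ``transport from the minimal distance $t$ at which the phenomenon occurs down to distance $1$ from a shifted base point''; that is precisely what makes the correction step harmless.
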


\begin{proof}
Let $t$ be as small as possible such that some element of $G_v$ does not fix some block of $\mathcal B_j$ in $\alpha^{kt}(D_v)$ (setwise). Such a $t$ exists since $\alpha$ has order $p$, so $\alpha^{kt}=\alpha$ for some $1 \le t \le p-1$. By assumption, there exist $F\in \mathcal B_j$ such that $F\subset \alpha^{kt}(D_v)$ and $g \in G_v$ such that $g(F) \neq F$.  Let $\beta\in R_L$ be such that $\beta^{kt}(v) \in F$.

Now, by our choice of $t$, every element of $G_v$ fixes every block of $\mathcal B_j$ in $\alpha^{k(t-1)}(D_v)$ (setwise). In particular, if $F_v$ is the block of $\mathcal B_j$ that contains $v$, then $\beta^{k(t-1)}(F_v)$ is fixed (setwise) by $g$, so there exists some $\gamma \in R_L$ such that $\gamma g$ fixes the point $\beta^{k(t-1)}(v)$. Conjugating $\gamma g$ by $\beta^{k(t-1)}$ yields an element of $G_v$ that does not fix $\beta^k(F_v)\subset \beta^k(D_v)=\alpha^k(D_v)$.
\end{proof}

%
We must deepen our understanding of the orbits of $G_v$. We define a new relation $\sim$ on the points of $R$ as follows. We say $v_1 \sim v_2$ if there exists $v_3 \in C_{v_2}$ such that there is no $g \in G_{v_1}$ with $g(v_2)=v_3$, i.e. $C_{v_2}$ is not contained in an orbit of $G_{v_1}$. Notice that Lemma~\ref{orbits-not-smaller} shows that this relation is symmetric, since if $v_2=\alpha(v_1)$ then there is some $i$ such that $\alpha^i(v_2) \in C_{v_1}$, and the lengths of the intersection of the orbits of $G_{v_2}$ in $C_{v_1}=\alpha^i(C_{v_2})$ are the same as the lengths of the orbits of $G_{v_1}$ in $\alpha^i(C_{v_1})$, since these are conjugate. The relation $\sim$ need not be transitive, but we can define an equivalence relation $\equiv_2$ by $v_1 \equiv_2 v_2$ if there is a sequence $u_1=v_1, u_2, \ldots, u_i=v_2$ such that $u_1\sim u_2\sim \ldots \sim u_i$.

In the case $n=4$, the relation $\equiv_2$ may have $1$, $p$, or $p^2$ equivalence classes, since clearly any two vertices in the same block of $\mathcal B_2$ are equivalent. If $\equiv_2$ has more than one equivalence class, then each equivalence class has the form $\cup_{i=0}^{p-1} \alpha^i(C_w)$ for some $\alpha\in R_L$ (since the equivalence classes are blocks of $G$; note that if there are $p^2$ equivalence classes, then $\alpha$ fixes $C_w$). Thus Lemma~\ref{orbits-not-smaller} in fact proves that $\sim$ is an equivalence relation in this case.

Before proving our main lemmas, we prove a result that will be needed in both.

\begin{lemma}\label{tau_2(C_v)}
Let $R$ be an elementary abelian group of rank 4. Under the notation we have established, $v \not\sim w$ for any $w \in \tau_2(C_v)$.
\end{lemma}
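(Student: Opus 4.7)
My plan is to proceed by contradiction: suppose $v\sim w$ for some $w\in\tau_2(C_v)$. First I reduce to the case $w=\tau_2(v)$. Writing $w=\tau_0^a\tau_1^b\tau_2(v)$, I replace $\tau_2$ by $\tau_0^a\tau_1^b\tau_2\in R_L$ and $\tau_2'$ by $\tau_0^a\tau_1^b\tau_2'\in\pi^{-1}R_L\pi$; since $\tau_0,\tau_1\in Z(G)$, the element $h:=\tau_2^{-1}\tau_2'\in G_v$ is unchanged, and every standing assumption from Section~\ref{tau2tau2'nice} is preserved. So we may assume $\tau_2'(v)=w$. By Lemma~\ref{samePgp}, $h$ fixes every block of $\mathcal B_2$ setwise, i.e.\ $h\in G_{v,\mathcal B_2}$. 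Since $\tau_0,\tau_1$ commute with $h$ and $h(v)=v$, we get $h(\tau_0^a\tau_1^b(v))=\tau_0^a\tau_1^b h(v)=\tau_0^a\tau_1^b(v)$, so $h|_{C_v}=\mathrm{id}$.

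By the structural observation at the start of Section~\ref{tau2tau2'nice}, the $G_v$-orbit of $w$ meets $C_w$ in $\{w\}$, in a size-$p$ block of one of the $p+1$ systems of imprimitivity refining $\mathcal B_2$, or in all of $C_w$; the assumption $v\sim w$ rules out the third. In the first sub-case, $h(w)=w$, and repeating the centrality argument at $w$ in place of $v$ yields $h|_{C_w}=\mathrm{id}$ as well. In the second sub-case, since $h\in G_v\cap G_{\{C_w\}}$ stabilises the $G_v$-orbit of $w$ in $C_w$ and this orbit is a block of the chosen system $\mathcal B_1^*$, one checks that $h$ stabilises every $\mathcal B_1^*$-block inside $C_w$ (using that the $G_v$-orbits in $C_w$ coming from the same system partition $C_w$ into $\mathcal B_1^*$-blocks). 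In either sub-case we therefore identify a single system $\mathcal B_1^*$ of blocks of size $p$ refining $\mathcal B_2$ such that every $G_{v,\mathcal B_2}$-orbit inside $C_v\cup C_w$ is contained in a $\mathcal B_1^*$-block.

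The plan is now to contradict assumption (A2) from the end of Section~\ref{tau2tau2'nice}. Since $\mathrm{rank}(R)=4$ and the $\langle\tau_2\rangle$-orbit of $C_v$ consists of only $p$ of the $p^2$ blocks of $\mathcal B_2$, we can pick $\beta\in R_L$ (e.g.\ $\beta=\tau_3$) with $\beta(C_v)$ outside the $\langle\tau_2\rangle$-orbit of $C_v$. Taking $\alpha=\tau_2$, one need only show that the $G_{v,\mathcal B_2}$-orbits inside $\beta(C_v)$ also lie in $\mathcal B_1^*$-blocks to produce the configuration explicitly forbidden by (A2).

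The main obstacle, and the most delicate step, is this last transfer of the orbit condition from $C_v$ and $C_w$ to $\beta(C_v)$, since a priori we have no control over $G_{v,\mathcal B_2}$ on the latter block. My intended tools are Lemma~\ref{orbits-not-smaller} (which prevents $G_v$-orbits from becoming strictly smaller along $R_L$-translates) together with the auxiliary element $h_3:=\tau_3^{-1}\tau_3'\in G_v$ from Lemma~\ref{samePgp}: $h_3$ fixes the blocks of $\mathcal B_3$ setwise and therefore moves $C_v$ to $\tau_3$-related $\mathcal B_2$-blocks modulo something controllable, so conjugating $h$ by $h_3$ (and exploiting the centrality of $\tau_0,\tau_1$ to commute these actions past each other) should carry the $\mathcal B_1^*$-block-preserving behaviour from $C_v$ and $C_w$ onto $\beta(C_v)$, giving the desired contradiction. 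In the sub-case where $h|_{C_v\cup C_w}=\mathrm{id}$ holds globally, one instead expects to push the identity action out to every $\mathcal B_2$-block, contradicting (A1) directly.
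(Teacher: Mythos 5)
There is a genuine gap at the decisive step. Your plan is to derive a contradiction with the second assumption recorded at the end of Section~\ref{tau2tau2'nice}, and for that you need the orbits of $G_{v,\mathcal B_2}$ in some block $\beta(C_v)$ \emph{outside} the $\langle\tau_2\rangle$-orbit of $C_v$ (you suggest $\beta=\tau_3$) to lie in blocks of the same size-$p$ system $\mathcal B_1^*$ that you extracted from $C_v\cup\tau_2(C_v)$. You acknowledge this transfer is the delicate step, but the tools you name cannot perform it: Lemma~\ref{orbits-not-smaller} only propagates information along the powers $\alpha^k(D_v)$ of a single translation, so it relates $\tau_2(C_v)$ to $\tau_2^k(C_v)$ and says nothing about $\tau_3(C_v)$; and conjugating by $\tau_3^{-1}\tau_3'$ gives no control either, since that element also lies in $G_v$ and fixes every block of $\mathcal B_2$, so it cannot carry information from $C_v$ or $\tau_2(C_v)$ over to $\tau_3(C_v)$. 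Worse, the configuration you would need to exclude --- the orbits of $G_v$ in independent $\mathcal B_2$-blocks being blocks of \emph{different} size-$p$ systems --- is exactly what survives the assumptions of Section~\ref{tau2tau2'nice} and is what Case 1 of Lemma~\ref{lack-of-wreathing} must labour to handle later; so no general transfer principle of the kind you hope for is available. The same objection applies to your first sub-case: knowing that $\tau_2^{-1}\tau_2'$ is the identity on $C_v\cup\tau_2(C_v)$ does not let you ``push the identity out'' to blocks in independent directions.

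The paper's proof avoids all of this by using the \emph{first} assumption of Corollary~\ref{done-if-no-p^2-wreathing} once for each of the $p+1$ size-$p$ refinements of $\mathcal B_2$. Fix such a system $\mathcal B_1$ and choose $\alpha\in R_L$ so that $\alpha(B_v)$ is a block of it not fixed by $\tau_2^{-1}\tau_2'$; then, writing $\alpha'$ for the element of $\pi^{-1}R_L\pi$ with $\alpha'(v)=\alpha(v)$, commutativity of $\pi^{-1}R_L\pi$ gives $\alpha'(\tau_2(B_v))=\alpha'(\tau_2'(B_v))=\tau_2'(\alpha(B_v))\neq\tau_2(\alpha(B_v))$, so $(\alpha')^{-1}\alpha\in G_v$ moves the block $\tau_2(B_v)$. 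Hence the $G_v$-orbits in $\tau_2(C_v)$ are not contained in blocks of $\mathcal B_1$, and since $\mathcal B_1$ was arbitrary, the trichotomy for how orbits of $G_v$ meet a $\mathcal B_2$-block forces $\tau_2(C_v)$ to be a single orbit of $G_v$, which is precisely $v\not\sim w$. This short commutator computation is the idea your proposal is missing; without it, or a genuine substitute for the transfer step, the argument does not go through.
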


\begin{proof}
By Corollary~\ref{done-if-no-p^2-wreathing}, we may assume that for any $\mathcal B_1$, some block of $\mathcal B_1$ is not fixed by $\tau_2^{-1}\tau_2'$. Let $\mathcal B_1$ be an arbitrary refinement of $\mathcal B_2$ with blocks of size $p$.

By the definition of $\sim$, showing that $v\not\sim w$ is equivalent to showing that $\tau_2(C_v)$ is contained in an orbit of $G_{v}$. Let $\alpha(B_v)$ be a block of $\mathcal B_1$ that is not fixed by $\tau_2^{-1}\tau_2'$. Consider $(\alpha')^{-1}\alpha$, where $\alpha'$ is the element of $\pi^{-1}R_L \pi$ such that $\alpha'(v)=\alpha(v)$. Clearly $(\alpha')^{-1}\alpha \in G_v$. Notice that $$\alpha'(\tau_2(B_v))=\alpha'(\tau_2'(B_v))=\tau_2'(\alpha'(B_v))=\tau_2'(\alpha(B_v))\neq \tau_2(\alpha(B_v)).$$ Therefore, $(\alpha')^{-1}\alpha(\tau_2(B_v))=(\alpha')^{-1}\tau_2\alpha(B_v)\neq \tau_2(B_v)$. Hence the orbits of $G_v$ in $\tau_2(C_v)$ are not contained in the blocks of $\mathcal B_1$.  There was nothing special about the choice of $\mathcal B_1$, so the orbits of $G_v$ in $\tau_2(C_v)$ are not contained in the blocks of any system of imprimitivity of $G$ that is a refinement of $\mathcal B_2$ with blocks of size $p$. The only way this can happen is if $\tau_2(C_v)$ is contained in an orbit of $G_v$, as claimed.
\end{proof}

In the next result, we dispose of the cases where $\equiv_2$ (and so $\sim$) have more than one equivalence class.

\begin{lemma}\label{wreathing}
Let $R$ be an elementary abelian group of rank 4. Under the notation we have established, suppose that $\equiv_2$ has more than one equivalence class. Then there exists $\psi \in G^{(2)}$ such that $\psi^{-1}\pi^{-1}R_L \pi\psi=R_L$.
\end{lemma}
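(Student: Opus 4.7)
The plan is to exploit the coarser block structure given by $\equiv_2$ to construct a conjugating element of $G^{(2)}$, by an argument that parallels Lemma~\ref{fixing-blocks} but at the level of $\mathcal{B}_2$-blocks rather than $\mathcal{B}_1$-blocks. First, by Corollary~\ref{tau3'} I may assume $\tau_0, \tau_1 \in Z(G)$, and by Corollary~\ref{done-if-no-p^2-wreathing} I may assume that both of its bullets fail (otherwise the conclusion is immediate). I would then split into the two subcases permitted by the structural remarks preceding Lemma~\ref{tau_2(C_v)}: either $\equiv_2$ has $p^2$ classes (each a single block of $\mathcal{B}_2$) or $p$ classes (each a union of $p$ blocks of $\mathcal{B}_2$ of the form $\bigcup_{i=0}^{p-1}\alpha^i(C_w)$ for some $\alpha \in R_L$).

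Next, I would use the hypothesis on $\equiv_2$ together with Lemma~\ref{tau_2(C_v)} to pin down how $\tau_2'$ differs from $\tau_2$ on each $\mathcal{B}_2$-block. Because any $w$ outside the $\equiv_2$-class of $v$ has $C_w$ contained in a single orbit of $G_v$, and because $\tau_2$ maps each $\equiv_2$-class to a different class, the element $\tau_2^{-1}\tau_2'$ must act on each $\mathcal{B}_2$-block it fixes setwise as an element of $\langle \tau_0, \tau_1\rangle$ restricted to that block, with the exponents constant across an $\equiv_2$-class by a Lemma~\ref{samePgp}-style argument. The blocks of $\mathcal{B}_2$ not fixed setwise by $\tau_2^{-1}\tau_2'$ can be treated by carefully following the $\tau_2$-orbits through them.

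Then I would define $\psi$ as a product of correction permutations of the form $\rho_E$ given by Lemma~\ref{tedslemma}, indexed so that the cumulative twist around each $\tau_2$-orbit of $\equiv_2$-classes cancels, in the spirit of the proof of Lemma~\ref{fixing-blocks}. The relation $(\tau_2')^p=1$ forces the corrections to sum to zero mod $p$ around each cycle, so the construction is consistent; and since $\tau_0, \tau_1 \in Z(G)$, the resulting $\psi$ commutes with them, so $\tau_0, \tau_1$ remain in $\psi^{-1}\pi^{-1}R_L\pi\psi$. Combining this with $\tau_2 \in \psi^{-1}\pi^{-1}R_L\pi\psi$, Lemma~\ref{tau1} (for $n=4$, which requires $\tau_0, \tau_1, \tau_2$) delivers an element of $G^{(2)}$ effecting the full conjugation $\psi^{-1}\pi^{-1}R_L\pi\psi = R_L$.

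The main obstacle I anticipate is verifying that each per-class correction truly lies in $G^{(2)}$. Lemma~\ref{tedslemma} provides only permutations supported on a single $\mathcal{E}$-class and drawn from $K$; a general $\equiv_2$-class need not coincide with an $\mathcal{E}$-class. I would therefore need to choose the underlying $\mathcal{B}_1$ (and hence $\mathcal{E}$) judiciously, possibly differently for different $\equiv_2$-classes, so that the desired correction breaks into valid $\rho_E$ pieces. The work in Section~\ref{tau2tau2'nice} constrains exactly this kind of situation, and the failure of both bullets of Corollary~\ref{done-if-no-p^2-wreathing} should force the configuration to be rigid enough that a workable choice of $\mathcal{B}_1$ exists in each subcase.
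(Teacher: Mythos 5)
There is a genuine gap, and it is exactly the one you flagged at the end but did not resolve: the correction you need cannot be assembled from the permutations $\rho_E$ supplied by Lemma~\ref{tedslemma}. Those permutations are supported on classes of $\mathcal E$ and act there as elements of $K$, the kernel of the action on $\mathcal B_1$; in particular they fix every block of $\mathcal B_1$ setwise. But the correction that must be applied on the slice containing $\tau_2(C_v)$ has to carry $w$ to $\tau_2'\tau_2^{-1}(w)$, and the standing assumption in Section~\ref{therest} (the failure of the first bullet of Corollary~\ref{done-if-no-p^2-wreathing}) is precisely that $\tau_2^{-1}\tau_2'$ fails to fix all the blocks of \emph{every} system of imprimitivity with blocks of size $p$ refining $\mathcal B_2$. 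So no ``judicious choice of $\mathcal B_1$'' can place this correction inside $K$; the very hypothesis that puts you in this lemma rules out your proposed repair. Relatedly, your intermediate claim that $\tau_2^{-1}\tau_2'$ restricts to each fixed $\mathcal B_2$-block as an element of $\langle\tau_0,\tau_1\rangle$ is only justified on blocks contained in a single orbit of $G_v$, and even where it holds, a permutation acting as varying powers of $\tau_0^a\tau_1^b$ on the various $\mathcal B_2$-blocks of one $\mathcal E$-class is not of the form $\rho_E$ with $\rho\in K$ (since $\tau_1\notin K$).

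The paper's proof takes a different and more direct route that you should compare with. It first uses Lemma~\ref{tau_2(C_v)} together with the fact that $\sim$ is an equivalence relation in this case to conclude that $\tau_2$ moves the $\equiv_2$-class of $v$ to a different class; after renaming $\tau_3$ so that the class of $v$ is $\bigcup_{i=0}^{p-1}\tau_3^i(C_v)$, every point lies in exactly one set $\tau_2^j\tau_3^i(C_v)$. It then defines $\varphi$ to act as the single group element $(\tau_2')^j\tau_2^{-j}$ on the whole slice $\tau_2^j\bigl(\bigcup_i\tau_3^i(C_v)\bigr)$. The identity $\varphi^{-1}\tau_2'\varphi=\tau_2$ is a one-line telescoping computation, and membership of $\varphi$ in $G^{(2)}$ is verified directly from the definition of $2$-closure: two points in the same $\equiv_2$-class lie in the same slice and so are simultaneously tracked by the genuine group element $(\tau_2')^j\tau_2^{-j}$, while pairs in different classes are handled by the orbit condition built into the definition of $\sim$. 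Lemma~\ref{tedslemma} plays no role here. Your overall architecture (correct $\tau_2'$ to $\tau_2$ by a piecewise-defined element of $G^{(2)}$, then invoke Lemma~\ref{tau1}) matches the paper, but the mechanism for producing that element and certifying it lies in $G^{(2)}$ is the heart of the argument, and the mechanism you propose does not work.
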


\begin{proof}
By Lemma~\ref{tau_2(C_v)}, we know that $v \not\sim \tau_2(v)$. Since $\sim$ is actually an equivalence relation under our current assumptions, this in fact implies that 
 $\tau_2(v)$ is not in the same equivalence class (of $\equiv_2$) as $v$. 
 
Redefine $\tau_3$ if necessary, so that if $\equiv_2$ has $p$ equivalence classes, then the equivalence class containing $C_v$ is $\cup_{i=0}^{p-1}\tau_3^i(C_v)$. Define $\varphi$ by $\varphi(w)=\tau_2'^{j}\tau_2^{-j}(w)$ for $w \in \tau_2^{j}\tau_3^i(C_v)$.

First notice that $\varphi$ commutes with $\tau_0$ and $\tau_1$. By Corollary~\ref{tau3'}, we may therefore assume that $\tau_0,\tau_1 \in \varphi^{-1}\pi^{-1}R_L\pi\varphi$. Let $w \in R$ be arbitrary, say $w \in \tau_2^j\tau_3^i(C_v)$. Then we have $$\varphi^{-1}\tau_2'\varphi(w)=\varphi^{-1}\tau_2'\tau_2'^j\tau_2^{-j}(w)=\tau_2^{j+1}\tau_2'^{-(j+1)}\tau_2'^{j+1}\tau_2^{-j}(w)=\tau_2(w),$$ so $\varphi^{-1}\tau_2'\varphi=\tau_2$.  Notice that since $\tau_2^{-1}\tau_2'$ is in $G_{v,\mathcal B_2}$, $\varphi$ also fixes every block of $\mathcal B_2$ (setwise). By the definition of $\equiv_2$, in order to ensure that $\varphi \in G^{(2)}$, we need only verify that for any pair $w_1, w_2$ with $w_1 \equiv_2 w_2$, there is some $g \in G$ such that $g(w_1)=\varphi(w_1)$ and $g(w_2)=\varphi(w_2)$. But this is clear from the definition of $\varphi$, with $g=\tau_2'^j\tau_2^{-j}$ since $w_1\equiv_2 w_2$ implies that if $w_1 \in \tau_2^j\tau_3^{i_1}(C_v)$, then $w_2 \in \tau_2^j\tau_3^{i_2}(C_v)$.

Thus, $\tau_0,\tau_1,\tau_2\in \varphi^{-1}\pi^{-1}R_L\pi\varphi$. Now Lemma~\ref{tau1} completes the proof. 
\end{proof}

We now complete the proof with a longer result that deals with the case where $\equiv_2$ has a single equivalence class.

\begin{lemma}\label{lack-of-wreathing}
Let $R$ be an elementary abelian group of rank 4. Under the notation we have established, suppose that $\equiv_2$ has a single equivalence class. Then there exists $\psi \in G^{(2)}$ such that $\psi^{-1}\pi^{-1}R_L \pi\psi=R_L$.\end{lemma}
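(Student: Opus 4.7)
The plan is to produce $\psi \in G^{(2)}$ such that $\psi^{-1}\tau_2'\psi = \tau_2$ (and $\psi$ commutes with $\tau_0$ and $\tau_1$); once we have this, $\psi^{-1}\pi^{-1}R_L\pi\psi$ will contain $\tau_0, \tau_1, \tau_2$, and Lemma~\ref{tau1} (together with Corollary~\ref{tau3'}) finishes the proof. The key difficulty, relative to the easy wreathing case of Lemma~\ref{wreathing}, is that here $\equiv_2$ links points across different $\mathcal B_2$-cosets, so we cannot freely choose the action of $\psi$ one $\mathcal B_2$-block at a time: any definition is forced by the consistency requirement of belonging to $G^{(2)}$.

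First I would record what is already known at this point in the argument. By Corollary~\ref{tau3'} we have $\tau_0 = \tau_0'$ and $\tau_1 = \tau_1'$ lying in $Z(G)$. By Lemma~\ref{tau_2(C_v)}, $\tau_2(C_v)$ is contained in a single orbit of $G_v$, i.e.\ $v\not\sim \tau_2(v)$. By the standing assumptions at the end of Section~\ref{tau2tau2'nice}, $\tau_2^{-1}\tau_2'$ moves some block in every $p$-block system refining $\mathcal B_2$, and whenever $\alpha, \beta \in R_L$ determine distinct $\langle \alpha\rangle$-orbits on $\mathcal B_2$, no such $p$-system simultaneously captures the $G_{v,\mathcal B_2}$-orbits in both $\alpha(C_v)$ and $\beta(C_v)$. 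These give tight local constraints on the $G_v$-orbit structure inside each $\mathcal B_2$-coset.

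Next I would use the connectivity of $\equiv_2$ to run an extension argument. Start with $\psi$ defined as the identity on $C_v$ and as $\tau_2'\tau_2^{-1}$ on $\tau_2(C_v)$ (forced, since $\tau_2(C_v)$ is a single $G_v$-orbit by Lemma~\ref{tau_2(C_v)}). Then walk through $R$ along a $\sim$-chain: for a coset $C' = \gamma(C_v)$ adjacent to an already-defined coset $C$ via $v_1\sim v_2$ with $v_1\in C, v_2\in C'$, the behaviour of $G_{v_1}$ on $C'$ (which, because $v_1\sim v_2$, is genuinely non-trivial and, because of the refinement assumptions, respects no $p$-block system) forces the only consistent extension of $\psi$ to $C'$ to be of the form $\tau_2'^{j}\tau_2^{-j}$ for the appropriate $j = j(C')$. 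I would verify that (a) the value of $j(C')$ depends only on the $\tau_2$-orbit of $C'$ in $\mathcal B_2$, so different $\sim$-paths to the same coset give the same answer; and (b) each individual adjustment can be performed by an element $(\tau_2'^j\tau_2^{-j})_E$ for some $\equiv$-class $E$, which belongs to $G^{(2)}$ by Lemma~\ref{tedslemma}. Finally, I would check that $\psi^{-1}\tau_2'\psi = \tau_2$ by direct computation on each coset, and that $\psi$ commutes with $\tau_0$ and $\tau_1$ because each of the local adjustments does.

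The main obstacle will be step three's well-definedness: showing that the value of $j(C')$ produced by one $\sim$-path to a coset $C'$ agrees with that produced by any other path. This is where the hypothesis of rank~$4$ (and the accompanying restriction that $\equiv_2$ has only the classes $1$, $p$, or $p^2$) must be used in full force, together with the two alternatives ruled out by Corollary~\ref{done-if-no-p^2-wreathing}. I expect this to reduce to a case analysis organized by the $G_v$-orbit type on a generic $\mathcal B_2$-coset, with the two bulleted conclusions of Corollary~\ref{done-if-no-p^2-wreathing} excluding exactly the configurations in which path-independence could fail. Once path-independence holds, assembling $\psi$ as a product of $(\tau_2'^j\tau_2^{-j})_E$'s inside $G^{(2)}$ and concluding via Lemma~\ref{tau1} is essentially formal.
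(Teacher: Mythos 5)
Your overall strategy---conjugate $\tau_2'$ to $\tau_2$ by an explicit $\psi\in G^{(2)}$ built block-by-block on $\mathcal B_2$ and then invoke Lemma~\ref{tau1}---matches the paper's, but the proposal leaves the central difficulty unresolved and, more seriously, commits to a form of $\psi$ that does not work. You propose that on each block $C'$ of $\mathcal B_2$ the map $\psi$ acts as $\tau_2'^{j}\tau_2^{-j}$ with $j=j(C')$ determined by the position of $C'$ in its $\tau_2$-orbit; that is exactly the map $\varphi$ used in Lemma~\ref{wreathing}, and it is available there only because $\equiv_2$ has more than one class, so the $2$-closure condition never has to be checked between blocks whose $\tau_2$-exponents differ. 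In the single-class case the paper first pins the structure down much more tightly: it produces $\alpha,\beta\in R_L$ with $\beta(C_v)\neq\alpha^i(C_v)$ for all $i$ and $v\sim\alpha(v),\beta(v)$, uses Corollary~\ref{done-if-no-p^2-wreathing} to force the $G_v$-orbits in $\alpha(C_v)$ and $\beta(C_v)$ to lie in two \emph{distinct} size-$p$ block systems $\mathcal B_1\neq\mathcal B_1'$, deduces $G_v=G_{v,\mathcal B_2}$ via Lemma~\ref{orbits-not-smaller}, and then splits into two cases according to whether a third direction $\gamma$ also satisfies $v\sim\gamma(v)$. That first case is eliminated by a separate contradiction argument; none of this appears in your sketch, and it is not ``essentially formal.''

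In the surviving case the correct conjugator is \emph{not} of your proposed form: the paper sets $\varphi(\alpha^a(v))=\sigma_{1,0}^{k_a}\alpha^a(v)$ and $\varphi(\tau_2^t\alpha^a(v))=(\tau_2')^t\varphi(\alpha^a(v))$, where $\sigma_{a,b}\in R_L$ records the action of $\tau_2^{-1}\tau_2'$ at $\alpha^a\tau_2^b(v)$ and the correction exponents $k_a$ are defined by $\sigma_{1,0}^{k_m}\sigma_{m,0}\sigma_{m,1}\cdots\sigma_{m,mi-1}=\sigma_{1,i}^{k_m'}$. These corrections depend on the $\alpha$-coordinate, not just the $\tau_2$-coordinate, and they are precisely what make the pairwise $2$-closure check succeed along the $\beta$-links, which join blocks with \emph{different} $\tau_2$-exponents and where the relevant $G_v$-orbits are blocks of $\mathcal B_1'$ rather than $\mathcal B_1$; the computation shows both endpoints are then moved compatibly by the single group element $(\tau_2')^{t-si}\tau_2^{si-t}$. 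With the map you wrote down these checks fail in general, so the ``path-independence'' you flag as the main obstacle is not merely unverified---it is false for your $\psi$. (A smaller point: the fact that $\tau_2(C_v)$ is a single $G_v$-orbit does not \emph{force} the value of $\psi$ there; it means the pair $(v,w)$ with $w\in\tau_2(C_v)$ imposes no constraint at all, and the forcing comes from other pairs.)
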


\begin{proof}
Since $\equiv_2$ has a single equivalence class, there must exist $\alpha, \beta \in R_L$ such that $\alpha$ and $\beta$ each have order $p$ on the blocks of $\mathcal B_2$, there is no $i$ such that $\alpha^i(C_v)=\beta(C_v)$, and $v \sim\alpha(v), \beta(v)$. Notice that since we may assume (by Corollary~\ref{tau3'}) that $\tau_0,\tau_1 \in Z(G)$, the intersection of any orbit of $G_v$ with any block of $\mathcal B_2$ must be a block admitted by $R_L$.  Observe that since $v \sim \alpha(v),\beta(v)$ and $G_{v,\mathcal B_2} \le G_v$, using the second condition of Corollary~\ref{done-if-no-p^2-wreathing} we may assume that if the intersections of the orbits of $G_v$ with $\alpha(C_v)$ are blocks of $\mathcal B_1$, then the intersections of the orbits of $G_v$ with $\beta(C_v)$ are blocks of $\mathcal B_1'$, where $\mathcal B_1'\neq \mathcal B_1$.

By Lemma~\ref{tau_2(C_v)}, we know that $\alpha(C_v),\beta(C_v)\neq \tau_2^i(C_v)$ for any $i$, so there exist some $j,k$ such that $\alpha^j\tau_2^k(C_v)=\beta(C_v)$. Since $G_v$ fixes every block of $\mathcal B_3$ setwise, in particular it fixes the block containing $\alpha^j(C_v)$ and $\beta(C_v)$ setwise. Since $\mathcal B_1' \neq \mathcal B_1$ and $\tau_0,\tau_1 \in Z(G)$, there must not exist $g \in G_v$ such that $g\alpha^j(C_v)=\beta(C_v)$. So $G_v$ must fix every block of $\mathcal B_2$ in the block of $\mathcal B_3$ that contains $\alpha^j(C_v)$ and $\beta(C_v)$. Using Lemma~\ref{orbits-not-smaller} with $i=3$ and $j=2$, this implies that $G_v=G_{v,\mathcal B_2}$.

Without loss of generality, assume $\beta(C_v)=\tau_2^i\alpha(C_v)$.

For the remainder of the proof, we must consider two cases separately: either there is some $\gamma \in R_L$ such that $\gamma(C_v)=\tau_2^j\alpha(C_v)$ for some $j \neq 0,i$, and $v \sim \gamma(v)$; or $v \not\sim \tau_2^j\alpha(v)$ for every $j \neq 0,i$.

{\bf \mathversion{bold}Case 1. There is some $\gamma \in R_L$ such that $\gamma(C_v)=\tau_2^j\alpha(C_v)$ for some $j \neq 0,i$, and $v \sim \gamma(v)$.}

Again using Corollary~\ref{done-if-no-p^2-wreathing} we may assume that the orbits of $G_v$ in $\gamma(C_v)$ are blocks of $\mathcal B_1''$, where $\mathcal B_1''\neq \mathcal B_1',\mathcal B_1$.

In this case, we claim that for any $g \in G_v$, if $g(\alpha(v))=\sigma_1(\alpha(v))$ and $g(\beta(v))=\sigma_2(\beta(v))$, then for any $w=\alpha^a\beta^b(v)$, we have $g(w)=\sigma_1^a\sigma_2^b(w)$. Since $g$ commutes with $\tau_0$ and $\tau_1$, and every block of $\mathcal B_2$ can be written as $\alpha^a\beta^b(C_v)$ for some $a,b$, this completely determines the action of $g$.

To prove our claim, we first note that $\alpha$, $\beta$, and $\gamma$ are interchangeable in the arguments we will make (and in fact in the statement of the claim). Notice that $\alpha(C_v)=\beta^{k_1}\gamma^{k_2}(C_v)$ for some $k_1,k_2$; in fact (by multiplying by appropriate powers of $\tau_0$ and $\tau_1$) we can choose $\beta$ and $\gamma$ such that $\beta^{k_1}\gamma^{k_2}(v)=\alpha(v)$.

Using Lemma~\ref{orbits-not-smaller}, we have $g(\beta^{k_1}(B_v'))=\beta^{k_1}(B_v')$, so there is some $\sigma_2 \in R_L$ that fixes all blocks of $\mathcal B_1'$ such that $\sigma_2^{-1}g$ fixes $\beta^{k_1}(v)$. Conjugating $G_v$ by $\beta^{k_1}$, we know that $G_{\beta^{k_1}(v)}$ fixes $\beta^{k_1}\gamma^{k_2}(B_v'')$, so $\sigma_2^{-1}g\alpha(B_v'')=\alpha(B_v'')$. Thus $g\alpha(B_v'')=\sigma_2\alpha(B_v'')$. Similarly, there is some $\sigma_1 \in R_L$ that fixes all blocks of $\mathcal B_1$ such that for some $x$, $\sigma_1\sigma_2^{-x}g$ fixes $\gamma^{k_2}(v)$, and a similar argument shows that $g\alpha(B_v')=\sigma_1^{-1}\sigma_2^x\alpha(B_v')$, where $\sigma_1\sigma_2^{-x}$ fixes every block of $\mathcal B_1''$.  In particular, $g\alpha(v) \in \sigma_2\alpha(B_v'')\cap \sigma_1^{-1}\sigma_2^x\alpha(B_v')\cap \alpha(B_v)$, and this intersection consists of the single point $\sigma_1\alpha(v)$ (so $x=1$). Hence $g\alpha(v)=\sigma_1\alpha(v)$. This argument in fact shows that if we know $g\alpha(v)$, we can determine from it $g\beta^{k_1}(v)$ and $g\gamma^{k_2}(v)$.

Now consider $g(\alpha\beta^{k_1}(v))$. Since $\sigma_1^{-1}g$ fixes $\alpha(v)$, it lies in $\alpha G_v \alpha^{-1}$, so must also fix $\alpha\beta^{k_1}(B_v')$. Similarly, $\sigma_2^{-1}g$ fixes $\beta^{k_1}(v)$, so fixes $\alpha\beta^{k_1}(B_v)$. Hence $g\alpha\beta^{k_1}(v) \in \sigma_1\alpha\beta^{k_1}(B_v') \cap \sigma_2 \alpha\beta^{k_1}(B_v)$. Straightforward calculations show that the unique vertex in this intersection is $\sigma_1\sigma_2\alpha\beta^{k_1}(v)$. Now $g\alpha^2(v) \in \alpha^2(B_v)$, say $\sigma_1^x g\alpha^2(v)=\alpha^2(v)$. Then $\sigma_1^x g \alpha\beta^{k_1}(v) \in \alpha\beta^{k_1}(B_v'')$, since $\alpha\beta^{k_1}(C_v)=\gamma^{-k_2}\alpha^2(C_v).$ The unique solution to this is $x=-2$, so $g\alpha^2(v)=\sigma_1^{2}\alpha^2(v)$. By repeating this argument $p-3$ more times, we may conclude that $g\alpha^a(v)=\sigma_1^a\alpha^a(v)$, for any $a$. As previously mentioned, there is nothing special about $\alpha$ as compared to $\beta$ or $\gamma$, so a corresponding result for $\beta$ also holds. 

Finally, since any block of $\mathcal B_2$ can be written uniquely as $\alpha^a\beta^b(C_v)$, by showing that if $g\alpha^a(v)=\sigma_1^a\alpha^a(v)$ and $g\beta^b(v)=\sigma_2^b
\beta^b(v)$ (replacing $\sigma_2$ by an appropriate power), then $g\alpha^a
\beta^b(v)=\sigma_1^a\sigma_2^b(\alpha^a\beta^b(v))$ we will complete the proof of our 
claim. Again, our knowledge of the orbits of $G_v$ tells us that since $\sigma_1^{-a}g$ fixes 
$\alpha^a(v)$, it also fixes $\alpha^a\beta^b(B_v')$, and since $\sigma_2^{-b}g$ fixes $\beta^{b}(v)$, it 
also fixes $\alpha^a\beta^b(B_v)$. Thus $g(\alpha^a\beta^b(v)) \in \sigma_1^a(\alpha^a
\beta^b(B_v')) \cap \sigma_2^b(\alpha^a\beta^b(B_v))$. The unique point of intersection shows that 
$g(\alpha^a\beta^b(v))=\sigma_1^a\sigma_2^b(\alpha^a\beta^b(v))$. This completes the 
proof of our claim. 

If $\tau_2^{-1}\tau_2'$ fixes either $\alpha(v)$ or $\beta(v)$, then the above claim demonstrates that $\tau_2^{-1}\tau_2'$ in fact fixes every block of either $\mathcal B_1'$ or (respectively) every block of $\mathcal B_1$, and hence by Corollary~\ref{done-if-no-p^2-wreathing}, we are done. So we may assume that $\tau_2^{-1}\tau_2'(\alpha(v))=\sigma_1 \alpha(v)$ where $\sigma_1 \in R_L$ is not the identity, and $\tau_2^{-1}\tau_2'(\beta(v))\neq \beta(v)$. 

Now, we have $\alpha^{-1}\alpha' \in G_v$, so by our claim, if $\alpha^{-1}\alpha'$ fixes either $\alpha(v)$ or $\beta(v)$, then it must fix every block system of either $\mathcal B_1'$ or $\mathcal B_1$ (respectively). But then since the only point that is in both $\gamma(B_v'')$ and either one of $\gamma(B_v)$ or $\gamma(B_v')$ is $\gamma(v)$, we must have $\alpha^{-1}\alpha'$ also fixes $\gamma(v)$. Now using whichever two of $\alpha(v)$, $\beta(v)$, and $\gamma(v)$ are fixed here, to take the roles of $\alpha(v)$ and $\beta(v)$ in the proof of our claim, we see that $\alpha=\alpha'\in Z(G)$. But then $\tau_2'\alpha(v)=\alpha\tau_2'(v)=\alpha\tau_2(v)=\tau_2\alpha(v),$ which cannot be true since $\tau_2^{-1}\tau_2'$ does not fix $\alpha(v)$. Thus $\alpha^{-1}\alpha'$ cannot fix either $\alpha(v)$ or $\beta(v)$, and hence (by our claim) does not fix any vertex outside of $C_v$. 

In particular, if $\alpha^{-1}\alpha'(\alpha(v))=\mu_1(\alpha(v))$ where $\mu_1 \in R_L$ fixes every block of $\mathcal B_1$, and $\alpha^{-1}\alpha'(\beta(v))=\mu_2(\beta(v))$ where $\mu_2 \in R_L$ fixes every block of $\mathcal B_1'$, then $\alpha^{-1}\alpha'(\tau_2(v))=\alpha^{-1}\alpha'(\alpha^{-1}\beta)^k(v)$ for some $k$ such that $ki=1$, and by our claim this will be $\mu_1^{-k}\mu_2^k(\alpha^{-1}\beta)^k(v)=\mu_1^{-k}\mu_2^k\tau_2(v)$, with neither $\mu_1$ nor $\mu_2$ being the identity.  Also, since $\alpha'$ commutes with $\tau_2'$, we have $$\alpha'\tau_2(v)=\alpha'\tau_2'(v)=\tau_2'\alpha'(v)=\tau_2'\alpha(v)=\sigma_1\tau_2\alpha(v),$$ so $\alpha^{-1}\alpha'\tau_2(v)=\sigma_1\tau_2(v) \in \tau_2(B_v)$. Since $\mu_2$ moves the blocks of $\mathcal B_1$ in a $p$-cycle and $k\neq 0$, $\mu_1^{-k}\mu_2^k\tau_2(v) \not\in \tau_2(B_v)$, which is a contradiction.

All of this serves to show that this case cannot arise unless Corollary~\ref{done-if-no-p^2-wreathing} applies.

{\bf \mathversion{bold}Case 2. $v \not\sim \tau_2^j\alpha(v)$ for every $j \neq 0,i$.}

We need to set up some notation for this case. Let $\sigma_{a,b}$ be the element of $R_L$ that takes $\alpha^a\tau_2^b(v)$ to $\tau_2^{-1}\tau_2'\alpha^a\tau_2^b(v)$. Notice that $\sigma_{a,b}$ fixes every block of $\mathcal B_2$. For any $m$, define $k_m$ and $k_m'$ to be the unique values such that 
$$\sigma_{1,0}^{k_m}\sigma_{m,0}\sigma_{m,1}\ldots\sigma_{m,mi-1}=\sigma_{1,i}^{k_{m}'}.$$

Define $\varphi$ by $\varphi(\alpha^a(v))=\sigma_{1,0}^{k_a}\alpha^a(v)$ for any $a$, $\varphi(\tau_2^t\alpha^a(v))=(\tau_2')^t\varphi(\alpha^a(v))$, and $\varphi$ commutes with $\tau_0$ and $\tau_1$. We claim that $\varphi\in G^{(2)}$, and that $\varphi^{-1}\tau_2'\varphi=\tau_2$. With Lemma~\ref{tau1}, this will complete the proof.

We begin by showing that $\varphi \in G^{(2)}$.
Since $\varphi$ fixes every block of $\mathcal B_2$, according to the assumptions of this case, we need only verify two things: that if $w,x$ are such that $x \in \alpha^a(C_w)$ for some $a$, then there is some $g\in G$ such that $g(B_x)=\varphi(B_x)$ and $g(B_w)=\varphi(B_w)$; and that if $w,x$ are such that $x \in \beta^a(C_w)$ for some $a$, then there is some $g\in G$ such that $g(B_x')=\varphi(B_x')$ and $g(B_w')=\varphi(B_w')$

Suppose that $x \in \alpha^a(C_w)$, say $w \in \tau_2^t\alpha^s(C_v)$ and $x \in \tau_2^t \alpha^{s+a}(C_v)$. Then $\varphi(w)=(\tau_2')^t\sigma_{1,0}^{k_s}\tau_2^{-t}(w) \in (\tau_2')^{t}\tau_2^{-t}(B_w)$. Also, $\varphi(x)=(\tau_2')^t\sigma_{1,0}^{k_{s+a}}\tau_2^{-t}(x) \in (\tau_2')^{t}\tau_2^{-t}(B_x)$, giving us the desired conclusion with $g=(\tau_2')^t\tau_2^{-t}$.

Suppose that $x \in \beta^a(C_w)$,  say $w \in \tau_2^t\alpha^s(C_v)$ and $x \in \tau_2^{t+ia} \alpha^{s+a}(C_v)$. Then 
\begin{eqnarray*}
\varphi(w)&=&(\tau_2')^t\sigma_{1,0}^{k_s} \tau_2^{-t}(w)\\
&=&(\tau_2')^t\sigma_{1,i}^{k_s'}\sigma_{s,0}^{-1}\sigma_{s,1}^{-1}\ldots\sigma_{s,si-1}^{-1} \tau_2^{-t}(w)\\
&=&\sigma_{1,i}^{k_s'}\sigma_{s,0}^{-1}\sigma_{s,1}^{-1}\ldots\sigma_{s,si-1}^{-1}\sigma_{s,0}\sigma_{s,1}\ldots\sigma_{s,t-1}(w)\\
&\in&\sigma_{s,0}^{-1}\sigma_{s,1}^{-1}\ldots\sigma_{s,si-1}^{-1}\sigma_{s,0}\sigma_{s,1}\ldots\sigma_{s,t-1}(B_w')\\
&=& (\tau_2')^{t-si}\tau_2^{si-t}(B_w').
\end{eqnarray*}
Similarly, 
\begin{eqnarray*}
\varphi(x)&=&(\tau_2')^{t+ia}\sigma_{1,0}^{k_{s+a}} \tau_2^{-t-ia}(x)\\
&=&(\tau_2')^{t+ia}\sigma_{1,i}^{k_{s+a}'}\sigma_{s+a,0}^{-1}\sigma_{s+a,1}^{-1}\ldots\sigma_{s+a,(s+a)i-1}^{-1} \tau_2^{-t-ia}(x)\\
&\in&\sigma_{s+a,0}^{-1}\sigma_{s+a,1}^{-1}\ldots\sigma_{s+a,(s+a)i-1}^{-1}\sigma_{s+a,0}\sigma_{s+a,1}\ldots\sigma_{s+a,t+ai-1}(B_x')\\
&=& (\tau_2')^{t-si}\tau_2^{si-t}(B_x').
\end{eqnarray*}
Again we have the desired conclusion, with $g=(\tau_2')^{t-si}\tau_2^{si-t}$. Hence $\varphi\in G^{(2)}$.

Now consider $\varphi^{-1}\tau_2'\varphi(w)$, where $w \in \tau_2^t\alpha^s(C_v)$. Although the calculation in the case $t=p-1$ is different from the other cases, straightforward calculations show that this is $\tau_2(w)$ in all cases.
\end{proof}

We have now proven our main theorem.

\begin{proof}[Proof of Theorem~\ref{thrm:main}] Lemmas~\ref{wreathing} and~\ref{lack-of-wreathing} together prove the desired result.
\end{proof}

\thebibliography{10}
\bibitem{BrianLewis} B. Alspach and L.A. Nowitz, Elementary proofs that
$\Z_p^2$ and $\Z_p^3$ are CI-groups, {\it Europ. J. Combin.} {\bf 20}
(1999), 607--617.

\bibitem{babai} L.~Babai, Isomorphism problem for a class of point-symmetric structures, \textit{Acta Math. Acad. Sci.
Hungar.} \textbf{29} (1977), 329--336.

\bibitem{Da} E. Dobson, Isomorphism problem for Cayley graphs of $\Z_p^3$,
{\it Discrete Math.} {\bf 147} (1995), 87--94.

\bibitem{DMV}E.~Dobson, J.~Morris, and P.~Spiga, Further restrictions on the structure of finite DCI-groups: an addendum, \textit{submitted}.

\bibitem{Godsil} C.D. Godsil, On Cayley graph isomorphisms, {\it Ars
Combin.,} {\bf 15} (1983), 231--246.

\bibitem{HirasakaMuzychuk}
M. Hirasaka and M. Muzychuk, The elementary Abelian group of odd order
and rank 4 is a CI-group, {\it J. Combin. Theory Ser. A,} {\bf 94} (2001), 339--362.

\bibitem{Dobson1995} E.~Dobson, Isomorphism problem for Cayley graphs of ${\mathbb Z}\sp 3\sb
	p$, \textit{Discrete Math.} \textbf{147} (1995), 87--94.

\bibitem{Lisurvey}C.~H.~Li,  On isomorphisms of finite Cayley graphs--a survey, \textit{Discrete Math.} \textbf{256} (2002), 301--334.

\bibitem{thesis} J.M. Morris, {\it Isomorphisms of Cayley Graphs}, Ph.D.
thesis, Simon Fraser University (1999).

\bibitem{Zpn} J. Morris, Results towards showing $\mathbb Z_p^n$ is a CI-group, \textit{Congr. Numer.} {\bf 156} (2002), 143--153.

\bibitem{Muz} M. Muzychuk, An elementary abelian group of large rank is not a CI-group, \textit{Discrete Math.} {\bf 264} (2003), 167--185.

\bibitem{Somlai} G. Somlai, Elementary abelian $p$-groups of rank $2p+3$ are not CI-groups, \textit{J. Alg. Combin.}, {\bf 34} (2011), 323--335.

\bibitem{Spiga} P. Spiga, Elementary abelian $p$-groups of rank greater than or equal to $4p-2$ are not CI-groups, \textit{J. Alg. Combin.}, {\bf 26} (207), 343--355.

\bibitem{Turner} J. Turner, Point-symmetric graphs with a prime number
of points, {\em J.
Combin. Theory} {\bf 3} (1967), 136--145.

\end{document}